\newlength\myheight
\newlength\mydepth
\settototalheight\myheight{Xygp}
\numberwithin{equation}{section}
\theoremstyle{plain}
\newtheorem{theorem}[equation]{Theorem}
\newtheorem{Lemma}[equation]{Lemma}
\newtheorem{proposition}[equation]{Proposition}
\newtheorem{corollary}[equation]{Corollary}
\theoremstyle{definition}
\newtheorem{definition}[equation]{Definition}
\theoremstyle{remark} \newtheorem{remark}[equation]{Remark}
\theoremstyle{remark} 
 \newtheorem*{remark*}{Remark}
\newtheorem*{remarks*}{Remark} \newtheorem{example}[equation]{Example}
\newcommand{\Cost}{\text{Cost}}
\newcommand{\R}{\mathbb{R}}
\newcommand{\Z}{\mathbb{Z}}
\DeclarePairedDelimiter{\abs}{\lvert}{\rvert}
\title[Cost Function]{On The Cost Function associated with Legendrian Knots}
\author{Dheeraj Kulkarni}\email{dheeraj@iiserb.ac.in}\address{Department of Mathematics, Indian Institute of Science Education and Research, Bhopal, India}
\author{Tanushree Shah} \email{tanushrees@cmi.ac.in}\address{Chennai Mathematical Institute, India}
\author{Monika Yadav} \email{monika18@iiserb.ac.in}\address{Department of Mathematics, Indian Institute of Science Education and Research, Bhopal, India}
\keywords{Knot theory, Contact topology, Legendrian knots, connect sum } \thanks{\emph{Subjclass[2020]}: 57K10, 57K14, 57K33 }
\begin{document}
\begin{abstract} In this article, we introduce a non-negative integer-valued function that measures 
the obstruction for converting topological isotopy between two Legendrian knots into a Legendrian 
isotopy. We refer to this function as the Cost function. We show that the Cost function induces a metric on the set of topologically isotopic Legendrian knots. Hence, the set of topologically isotopic 
Legendrian knots can be seen as a graph with path-metric given by the Cost function. Legendrian simple 
knot types are shown to be characterized using the Cost function. We also get a quantitative version of Fuchs-Tabachnikov's Theorem that says any two Legendrian knots in $(\mathbb{S}^3,\xi_{std})$ in the same topological knot type become Legendrian isotopic after sufficiently many stabilizations \cite{FT}. 
We compute the Cost function for Legendrian simple knots (for example torus knots) and we note the behavior of Cost function for twist knots and cables of torus knots (some of which are Legendrian non-simple). We also construct examples of Legendrian representatives of 2-bridge knots and compute the Cost between them. Further, we investigate the behavior of the Cost function under the 
connect sum operation. We conclude with some questions about the Cost function, its relation with the standard contact structure, and the topological knot type.
\end{abstract}

\maketitle
\tableofcontents

\section{Introduction}

A \emph{Legendrian} knot in $\R^3 $ with the standard contact structure $\xi_{st} = \text{ker}(dz-ydx) $ is a smooth
embedding of $\mathbb{S}^1 $ in $\R^3 $ such that its image is tangent to the contact plane at each point. Two
Legendrian knots are called \emph{Legendrian} isotopic if there is a smooth 1-parameter family of Legendrian knots starting with one and ending with the other. 
The classification of Legendrian knots up to Legendrian isotopy is not completely understood. It is easy to see that given a topological embedding of $\mathbb{S}^1$ in $\R^3 $, denoted by $K$, it can be approximated by a Legendrian knot in an arbitrarily small $C^0 $-neighborhood of $K$.
Hence, there are infinitely many Legendrian knots approximating $K$. All of them are topologically isotopic to $K$. We refer to a Legendrian knot that approximates $ K$ as a \emph{Legendrian representative} of the knot type of $K$.

The properties of the contact structure $\xi_{st} $ reflect deeply in the classification problem of Legendrian representatives of a given knot. Sometimes, the classical invariants given by the Thurston-Bennequin number ($tb$) and the rotation number ($rot$) completely classify Legendrian representatives up to Legendrian isotopy.
On the other hand, there are knot types where classical invariants are not sufficient to tell the Legendrian representatives apart (see \cite{C1},\cite{C2}). Deep mathematical ideas based on counting J-holomorphic curves and packaging the information into differential graded algebras (DGAs) were developed by Eliashberg-Chekanov (\cite{EF}) and Ng (\cite{LNg}) to classify Legendrian representatives. Additionally, Legendrian knots are classified by invariants such as $\rho$-rulings. For exposition on $\rho$-rulings refer to \cite{E}.

In contrast, there have not been many attempts to understand the \emph{space} of all Legendrian representatives of a given knot type. Whether the set of all Legendrian representatives of a given knot type admits a meaningful and natural structure is the question in focus. Mountain ranges formed by Legendrian representatives based on $tb $ and $rot $ have been useful in a limited manner (\cite{E}). The limitation of mountain ranges arises from the fact that $tb$ and $rot$ are not sufficient for many families of knot types.

In this article, we introduce a function that assigns a non-negative integer to a pair of Legendrian representatives of a given knot type. Roughly speaking, this function measures the obstruction to converting a topological isotopy between two Legendrian representatives into a Legendrian isotopy. The output of this function, an integer, tells us the minimum cost we have to pay to upgrade a topological isotopy into a Legendrian isotopy. Hence we refer to it as the `Cost function'. We show that the Cost function gives a metric (see Section \ref{Costgraph}) on the set of Legendrian representatives of a given knot type turning it into a metric space. Therefore, we get a graph of Legendrian representatives where path metric is given by the Cost function.

We investigate the properties of the Cost function. We give bounds on the Cost function in terms of $tb$
and $rot$ (Lemma \ref{CostL9}). We obtain a characterization of Legendrian simple knot types in terms of an explicit formula for the Cost function. We give a quantitative version of Fuchs-Tabachnikov's Theorem (\cite{FT}). We study the effect of Legendrian connect-sum operation on the Cost function (\cite{C}). We give examples of Legendrian twist knots and torus knots together with values of Cost function. 
Further, we compute the Cost function of certain new families of 2-bridge knots which are distinguished using $\rho$-rulings (see \ref{2-b}). 
Finally, we raise a few questions for further understanding of the behavior of the Cost function from theoretical as well as computational viewpoints. \\

\noindent {\bf Acknowledgements:} The second author was partly supported by "Singularities and Low Dimensional Topology" semester at Alfr\'ed R\'enyi Institute, Hungary, and Chennai Mathematical Institute, India.

The third author was supported by the CSIR grant 09/1020(0152)/2019-EMR-I, DST,
Government of India.
\section{Preliminaries} 
Throughout this paper, we will be working with the contact manifold $(\mathbb{R}^3,\xi_{st})$, where $\xi_{st}:=ker(dz-ydx)$. For a detailed exposition, we refer the reader to \cite{E}.\ We briefly recall the notions for the convenience of the reader.

\ 

A Legendrian knot in $(\mathbb{R}^3,\xi_{st})$ is a smooth embedding $\gamma:\mathbb{S}^1\rightarrow \mathbb{R}^3$ such that $\gamma'(t)\in \xi_{st}(\gamma(t)) \:\forall t \in \mathbb{S}^1.$ Such an embedding is called a Legendrian embedding. A front projection of a Legendrian knot in $(\mathbb{R}^3,\xi_{st})$ is its projection onto the $xz$-plane.

\begin{figure}[!htbp]
        \centering
        \includegraphics[scale=0.7]{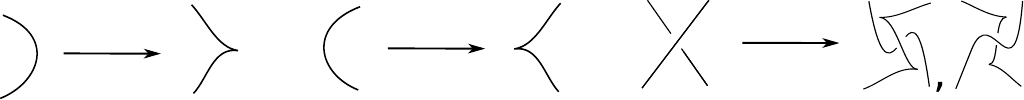}
        \caption{Converting a knot diagram into a front projection.}
        \label{fig:CTF}
    \end{figure}

    A knot diagram $D$ can be converted into a front projection of a Legendrian knot in the following way:  replace all the vertical tangencies with cusps and whenever at a crossing, the slope of overcrossing is more positive than the slope of undercrossing modify it by adding two cusps as shown in Figure \ref{fig:CTF}.

We have adopted a version of the Reidemeister theorem for Legendrian knots from \cite{SJ}. 
    \begin{theorem}[\cite{SJ}]
        Let $K_1$ and $K_2$ be Legendrian knots. Then $K_1$ is Legendrian isotopic to $K_2$ if and only if their front projections are related by regular homotopy and a sequence of moves shown in Figure \ref{fig:LRmoves}.
     \end{theorem}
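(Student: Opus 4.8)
The plan is to imitate the classical proof of Reidemeister's theorem, resolving a Legendrian isotopy into elementary moves by a genericity argument, while exploiting the special feature of $(\mathbb{R}^3,\xi_{st})$ that a Legendrian knot \emph{is} its front projection: away from cusps the missing coordinate is recovered by $y=z'/x'$, so a smooth path of \emph{generic} fronts (isolated semicubical cusps, transverse double points, distinct and generic crossing values, and no other vertical tangencies) lifts canonically to a smooth path of Legendrian knots, and conversely every Legendrian isotopy projects to such a path of fronts which is generic for all but finitely many parameters.

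For the ``if'' direction it is enough to check, one move at a time, that each modification in Figure \ref{fig:LRmoves} is the front of a Legendrian isotopy supported in a ball, and that a regular homotopy through generic fronts lifts to a Legendrian isotopy. The first part is a finite verification: for each move one writes down an explicit interpolating $1$-parameter family of front pictures, notes that it passes through exactly one non-generic front, and lifts it by $y=z'/x'$, using the semicubical normal form of Figure \ref{fig:CTF} at the instant a cusp pair appears or disappears. The second part is immediate from the lifting principle, together with the observation that along a regular homotopy of generic fronts the slopes at each crossing stay ordered, so the over/under information is carried along unambiguously.

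For the ``only if'' direction, let $\{\gamma_s\}_{s\in[0,1]}$ be a Legendrian isotopy with $\gamma_0=K_1$ and $\gamma_1=K_2$, and let $\bar\gamma_s=\pi\circ\gamma_s$ be the associated path of fronts, $\pi(x,y,z)=(x,z)$. The core step is to perturb $\{\gamma_s\}$ rel endpoints, staying within Legendrian isotopies, so that $\{\bar\gamma_s\}$ becomes a \emph{generic} family: a generic front for all but finitely many $s$, and at each exceptional parameter exactly one codimension-one degeneration, occurring transversally in $s$. Since $(\mathbb{R}^3,\xi_{st})$ is the $1$-jet space $J^1(\mathbb{R})$ and Legendrian perturbations correspond precisely to perturbations of the wave front, this reduces to Thom jet-transversality for the family of front maps with respect to the natural stratification of ``non-generic front'' by codimension. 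The codimension-one stratum consists of a transverse triple point, a tangency of two strands, a strand passing through a cusp, the birth or death of a crossing flanked by a pair of cusps, and some purely positional degeneracies (two singular $x$-values coinciding), the last being realized by a regular homotopy of fronts. Passing through each of the first four is, by inspection, one of the moves of Figure \ref{fig:LRmoves}, and between consecutive exceptional parameters $\bar\gamma_s$ changes only by a regular homotopy of generic fronts; concatenating over the finitely many exceptional parameters exhibits the desired sequence of moves and regular homotopies relating $K_1$ and $K_2$.

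The main obstacle is precisely the transversality and classification step of the ``only if'' direction: one must verify that a generic Legendrian isotopy projects to a generic family of fronts and that the codimension-one degenerations are exactly those listed, which requires handling the \emph{global} data — the $x$-coordinates of the double points — simultaneously with the local singularity data, all while the admissible perturbations are constrained to remain Legendrian. Passing to the wave-front description in $J^1(\mathbb{R})$, where Legendrian perturbations become unconstrained front perturbations, is what makes the transversality argument go through cleanly; the remaining bookkeeping — which degeneration yields which move, for each configuration of strand orientations and co-orientations — is the finite case analysis recorded in Figure \ref{fig:LRmoves}.
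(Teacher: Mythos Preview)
The paper gives no proof of this statement: it is quoted verbatim from \cite{SJ} as background and used as a black box thereafter, so there is nothing in the paper to compare your argument against.

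Your outline is the standard route taken in \cite{SJ}: identify Legendrian knots in $(\mathbb{R}^3,\xi_{st})\cong J^1(\mathbb{R})$ with their fronts, perturb a Legendrian isotopy to a generic one-parameter family of fronts via jet transversality, and classify the codimension-one bifurcations. As a strategy this is correct, but your enumeration of the codimension-one events is off in two places. First, a \emph{tangency of two smooth strands} in a front is not an admissible event: equal slopes at a common $(x,z)$ force equal $y$-coordinates, so the two strands actually meet in $\mathbb{R}^3$ and the curve fails to be embedded at that instant; a generic path of \emph{embedded} Legendrians avoids this entirely, and there is no corresponding LR move. Second, LR1 is the swallowtail---the birth or death of a pair of adjacent cusps with no crossing involved---not ``a crossing flanked by a pair of cusps''. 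The correct list is: swallowtail (LR1), a smooth strand passing through a cusp (LR2, in its two flavours), and a transverse triple point (LR3), together with the purely positional coincidences you mention that are absorbed by regular homotopy. With that correction your sketch matches the argument in the cited reference.
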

     \begin{figure}[!htbp]
    \centering
    \includegraphics[scale=0.8]{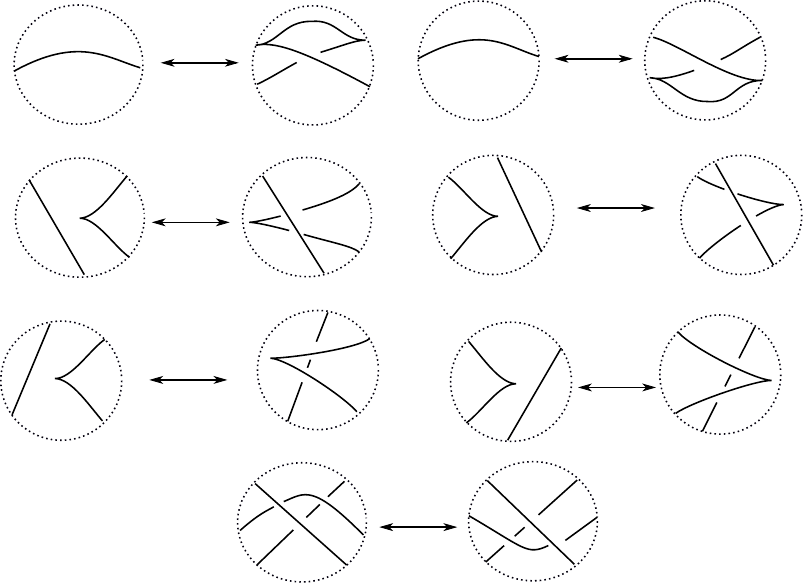}
    \caption{The moves shown in this figure are called Legendrian Reidemeister (LR) moves. We will denote each move in the first row by LR1, each move in the second and third rows by LR2, and the move in the last row by LR3.}
    \label{fig:LRmoves}
\end{figure}

Let $K$ be a Legendrian knot with a front projection $F$.
An oriented arc in a front projection $F$ can be replaced by an arc with two downwards (upwards) cusps as shown in Figure \ref{fig:stabilization}. This process is called a \textit{positive (negative) stabilization} of $K$. We use the notation $S^{\pm}(K)$ and $S^{\pm}(F)$ to denote a positive (negative) stabilization of $K$ and $F$ respectively.
We also use the notation $(S^{\pm})^p$ to denote $p$-many positive (negative) stabilizations.  A stabilization of $K$ is independent of its location in the front projection up to Legendrian isotopy. Adding a stabilization changes $tb$ by $-1$ and rotation number by $\pm 1$. 
 \begin{figure}
        \centering
        \includegraphics[scale=0.5]{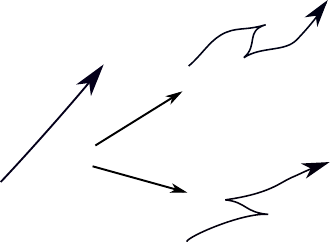}
        \caption{Positive and a negative stabilization}
        \label{fig:stabilization}
    \end{figure}
    
A (\textit{Legendrian}) \textit{topological knot type} is an equivalence class of (Legendrian) topological knots up to (Legendrian) topological isotopy. For a topological knot type $\mathcal{K}$, we use the notation $\mathcal{L(K)}$ to denote the set of all Legendrian representatives of $\mathcal{K}$.

We now give a brief introduction to the connect sum operation on Legendrian knots in $\mathbb{S}^3$ with the standard tight contact structure. A detailed description of the connected sum of Legendrian knots is given in \cite{EH}. In this article, 
we will be only working with the diagrammatic interpretation of the connected sum using the front projections as shown in Figure \ref{fig:LCS}. The front projection $F_1\#F_2$ is obtained by resolving a right cusp from $F_1$ and a 
left cusp from $F_2$. One can refer to Section 4 of \cite{EH} to see that the connected sum operation shown in Figure \ref{fig:LCS} is independent of the choices of the left cusp and the right cusp.

\begin{figure}[!htbp]
    \centering
    \includegraphics[scale=0.7]{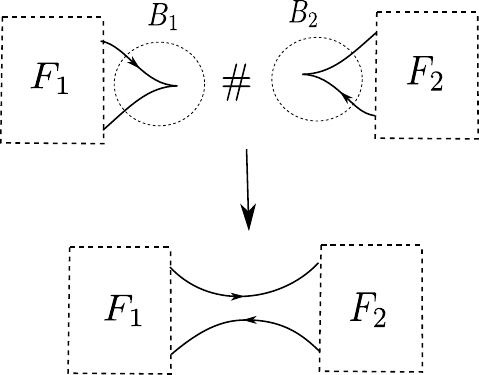}
    \caption{Connected sum of $F_1$ and $F_2$.}
    \label{fig:LCS}
\end{figure}

\begin{remark}\label{newRemark1}
    It follows from a simple observation that if $K_1\#K_2$ is null-homologous then $tb(K_1\#K_2)=tb(K_1)+tb(K_2)+1$ and $rot(K_1\#K_2)=rot(K_1)+rot(K_2)$, see \cite{EH}.
\end{remark}

\begin{remark}\label{newRemark3}
    Using Remark \ref{newRemark1} and Corollary 3.5, in \cite{EH}, the connected sum of maximal $tb$ Legendrian knots is a maximal $tb$ Legendrian knot.
\end{remark}
\begin{remark}\label{newRemark}
    Legendrian prime decomposition of a Legendrian knot is unique up to moving stabilizations from one component to another and possible permutations of components (\cite{EH}). 
\end{remark}

\begin{remark}
    Section 4 onwards, all the knots are oriented and a (Legendrian) topological knot type is considered up to the (Legendrian) isotopy of oriented (Legendrian) topological knots.
\end{remark}

Now onwards, by `R-moves', we mean smooth Reidemeister moves, and by `LR-moves' we mean Legendrian Reidemeister moves.

\section{Cost Function}

Fuchs-Tabachnikov \cite{FT} proved that any two Legendrian knots, that are topologically isotopic, can be stabilized sufficiently many times to obtain 
Legendrian isotopic knots. In the light of this theorem, we define the Cost function which measures the obstruction for the two Legendrian knots in the same topological type from becoming Legendrian isotopic. 
Our situation reduces to having two front diagrams that are topologically isotopic. Assume that we have front diagrams $F_0$ and $F_1$ that represent topologically isotopic knots. Then there is a sequence of Reidemeister moves and 
planar isotopies converting the knot diagram $F_0$ to $F_1$. 

    Our goal is to find out how one can upgrade a topological isotopy to a Legendrian isotopy. 
Hence the goal is twofold: convert Reidemeister moves into Legendrian Reidemeister moves and convert planar isotopies into planar isotopies that lift to Legendrian isotopies (note that not all planar isotopies lift to Legendrian isotopies, for example, a rotation may not lift to a Legendrian isotopy). Hence it is necessary to consider planar isotopies.
    We recall that a planar isotopy can be converted to a Legendrian isotopy if and only if there are no vertical tangencies that appear during the planar isotopy. Planar isotopies can be divided into -- rotations, perturbation of arcs without introducing new self intersection, scaling, and translations. Among these, scaling 
and translations can be lifted to a Legendrian isotopy. However, the rotations (local or global) may introduce vertical tangencies. Further, a \textit{global}  rotation of knot diagram can be 
seen as a composition of finitely many locally supported rotations (see Remark \ref{composition}). Thus, it is \emph{sufficient} to consider local rotations for our purpose. 
Therefore we have the following definition.

\begin{definition}
We say that front projections $F_0$ and $F_1$ differ by an R-move (local rotation) in a disc $Q$ and they are identical outside $Q$ if there is a 
Reidemeister move (local rotation) supported within the disc $Q$ such that after application of it on $F_0$ as a topological knot diagram, $F_0$ changes to topological knot diagram $F_1$. 
\end{definition}

Please note that in the above definition, a knot diagram refers to a regular projection and a front projection refers to a generic front projection where there is no vertical tangency and only one type of crossing allowed. Also note that during this process we add cusps which are then counted in the cost function.

In the following statement, we show that a topological isotopy between two Legendrian knots can be converted to a Legendrian isotopy by the process of stabilizations. It can be articulated in the language of R-moves and LR-moves.

First, we will show that any R-move between two front projections can be converted into an LR-move by stabilizing the fronts. 

\begin{theorem}\label{CostL1}
    Let $F_0$ and $F_1$ be front projections that differ by an R-move or a local rotation in a disc $Q$ and are identical outside $Q$. Then $F_0$ and $F_1$ can be stabilized sufficiently many times within $Q$ to obtain Legendrian isotopic knots and this Legendrian isotopy is supported in $Q$.
\end{theorem}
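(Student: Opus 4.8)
The strategy is to handle the Reidemeister moves (R1, R2, R3) and local rotations one at a time, since the statement is local to the disc $Q$. For each case, the idea is the same: the front projection $F_0$ inside $Q$ is \emph{not} a legal front for a Legendrian knot in general — an R move is performed on a topological knot diagram, where vertical tangencies and crossings are unconstrained — so the first step is to convert both $F_0|_Q$ and $F_1|_Q$ into genuine front projections by the procedure recalled in Section 2 (replace vertical tangencies with cusps; at each crossing where the overstrand has larger slope than the understrand, insert a pair of cusps as in Figure \ref{fig:CTF}). This insertion of cusps is precisely a sequence of stabilizations, and it is supported in $Q$. Having made both ends legal fronts, I must then exhibit a Legendrian isotopy (a regular homotopy together with LR1--LR3 moves, by the Legendrian Reidemeister theorem from \cite{SJ}) between the stabilized $F_0|_Q$ and the stabilized $F_1|_Q$, again supported in $Q$, so that it glues with the identity isotopy outside $Q$.

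First I would dispose of the local rotation case: rotating a small disc of the diagram corresponds, after the cusp-insertion normalization, to a Legendrian planar isotopy (regular homotopy of the front) — the only subtlety is that rotating past the horizontal/vertical directions changes which strands need cusps, so one may need to stabilize on both sides and then check that the two cusped pictures are related by LR moves; this is a finite check. Then I would treat R1 (adding/removing a kink): a kink in a topological diagram becomes, after normalization, a front with a single extra cusp-pair, and LR1 is exactly the move that creates or destroys such a feature up to stabilization — so the number of cusps to add is small and explicit. R2 (a strand passing over/under another) normalizes to the configuration governed by LR2. The main work is R3 (the triangle move): here three strands and three crossings are involved, each crossing potentially needing a cusp-pair depending on the local slopes, so the bookkeeping of which stabilizations to perform on $F_0$ versus $F_1$ is more involved, and one invokes LR3 (together with LR2 and regular homotopy) to slide the middle strand across.

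The key uniform point — and the reason "sufficiently many" stabilizations suffice — is that any two front projections of Legendrian knots that are Legendrian isotopic are related by LR moves and regular homotopy, and stabilization only adds cusps without changing the underlying knot's Legendrian type; so once both ends are normalized to legal fronts that present the \emph{same} Legendrian knot inside $Q$ with matching boundary data, a supported Legendrian isotopy exists. I would make the boundary-matching explicit: arrange that the stabilizations are inserted away from $\partial Q$ so that $F_0$ and the stabilized $F_1$ agree on a collar of $\partial Q$, which lets the isotopy be extended by the constant isotopy.

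\textbf{Main obstacle.} The delicate part is \emph{not} the existence of some isotopy but keeping everything supported in $Q$ while also controlling orientations and the parity/sign of the stabilizations. Concretely: after inserting cusp-pairs to legalize a crossing, the resulting cusps come with orientations (up or down), and to actually realize the R3 reconfiguration by LR moves one may find that $F_0$ and $F_1$ require cusp-pairs of \emph{different} signs; reconciling this forces adding compensating stabilizations (a positive and a negative stabilization can be traded, up to Legendrian isotopy, but only at the cost of more cusps). Pinning down the minimal such collection, and verifying that the resulting two fully-cusped local pictures are genuinely connected by a sequence of LR1--LR3 moves without disturbing $\partial Q$, is where the real case-by-case diagram chase lives; I expect to present it via a small number of explicit figures rather than a formula.
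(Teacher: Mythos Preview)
Your overall plan---case-by-case treatment of R1, R2, R3, and local rotation, each resolved by explicit local pictures showing how added stabilizations convert the topological move into a sequence of LR moves supported in $Q$---is exactly the approach the paper takes. One misreading to correct: by hypothesis $F_0$ and $F_1$ are \emph{already} front projections, so there is no ``legalization'' step; the point of the stabilizations is not to repair illegal tangencies or crossings but to supply the extra cusps that let the given R move be rewritten as a composite of LR1--LR3 moves inside $Q$. With that framing fixed, your proposed diagram chase (and your identification of the sign/orientation bookkeeping in the R3 case as the main nuisance) matches the paper's argument.
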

\begin{proof}
   Reidemeister's Theorem \cite{R} implies that any two knot diagrams representing isotopic knots are related by a sequence of Reidemeister moves and local rotations. Our goal is to convert R-moves and local rotations into LR-moves supported within $Q$ by using stabilizations and destabilizations within $Q$.  In particular, strands involved in any R-move or local rotation within $Q$ differ only in addition or deletion of zig-zags. Thus, we consider each R-move and local rotation one by one. 
   
   By hypothesis $F_0$ and $F_1$ are connected by a single R-move or a local rotation, then we have the following cases:

   \noindent  \textit{(i) Conversion of R1 move into a sequence of LR-moves.}
   
Let $F_0$ and $F_1$ be connected by an R1 move in a disc $Q$ and are identical outside $Q$. Let $x$ be the new crossing in $F_1$ created by the R1 move. Then 
$x$ is either a left-handed crossing or a right-handed crossing. Let us assume that $x$ is a left-handed crossing, the case of a right-handed crossing is similar to it. For an R1 move, we have a single strand in the disc. 
This strand could be vertical or non-vertical. Figure \ref{fig:CostLR1} shows two examples (one vertical and one non-vertical) of such front projections 
$F_0$ and $F_1$ in the disc $Q$.  All the other possible cases are obtained from these by stabilizing and destabilizing (see Figure \ref{fig:CostLR1}) arcs $\alpha$, $\beta$, and $\gamma$.
    \begin{figure}
       \centering
       \includegraphics[width=0.15\textwidth]{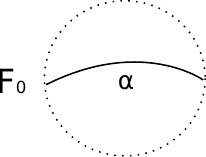}
       \hspace{30pt}
       \includegraphics[width=0.15\textwidth]{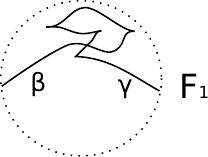}
       \hspace{30pt}
       \includegraphics[width=0.15\textwidth]{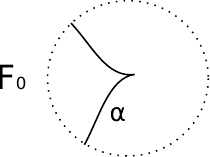}
        \hspace{30pt}
       \includegraphics[width=0.15\textwidth]{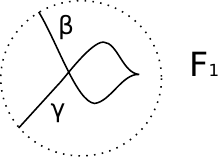}
       \caption{Basic cases of front projections $F_0$ and $F_1$ connected by an R1 move.}
       \label{fig:CostLR1}
   \end{figure}
   Since stabilization can be moved across crossings without changing the Legendrian knot type, it is enough to discuss cases shown in Figure \ref{fig:CostLR11} and Figure \ref{fig:CostLR12}. For the case shown in Figure \ref{fig:CostLR11}, the front projection $F$ obtained by adding two cusps to $F_0$ is connected to $F_1$ by an LR2 move followed by an LR1 move. This Legendrian isotopy connecting $F$ and $F_1$ is supported within $Q.$ In the second case when $F_0$ and $F_1$ are connected by R1 move as shown in Figure \ref{fig:CostLR12}, the Legendrian isotopy supported within $Q$ is obtained by stabilizing $F_0$ three times. 
    \begin{figure}
        \centering
        \includegraphics[scale=0.5]{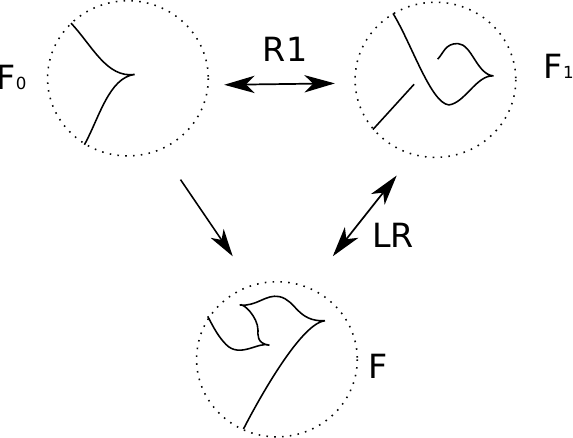}
        \caption{Converting an R1 move into a sequence of LR-moves.}
        \label{fig:CostLR11}
    \end{figure}
    
    \begin{figure}
        \centering
        \includegraphics[scale=0.5]{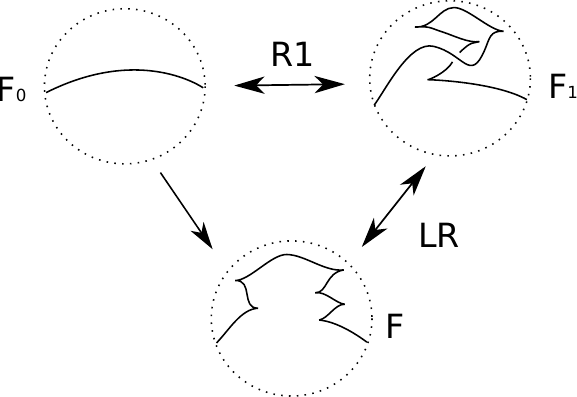}
        \caption{Converting an R1 move into a sequence of LR-moves.}
        \label{fig:CostLR12}
    \end{figure}

 \noindent  \textit{(ii) Conversion of R2 move into a sequence of LR-moves.}

 Consider front projections $F_0$ and $F_1$ that are connected by an R2 move in a disc $Q$ and are identical outside $Q$. There are only two cases up to Legendrian isotopy, either the two strands are vertical or non-vertical within $Q$. 
 Therefore, we consider two cases as shown in Figure \ref{fig:CostLR21} and Figure \ref{fig:CostLR22}. All possible front projections $F_0$ and $F_1$ that 
 are connected by a horizontal R2 move can be obtained by stabilizing $F_0$ and $F_1$ shown in Figure \ref{fig:CostLR22} and the cases for the vertical R2 
 move can be exhausted by stabilizing the fronts shown in Figure \ref{fig:CostLR21}. Both of these figures prove the existence of Legendrian isotopies between $F_1$ and $F$ supported within $Q$.
    
\begin{figure}
        \centering
        \includegraphics[scale=0.5]{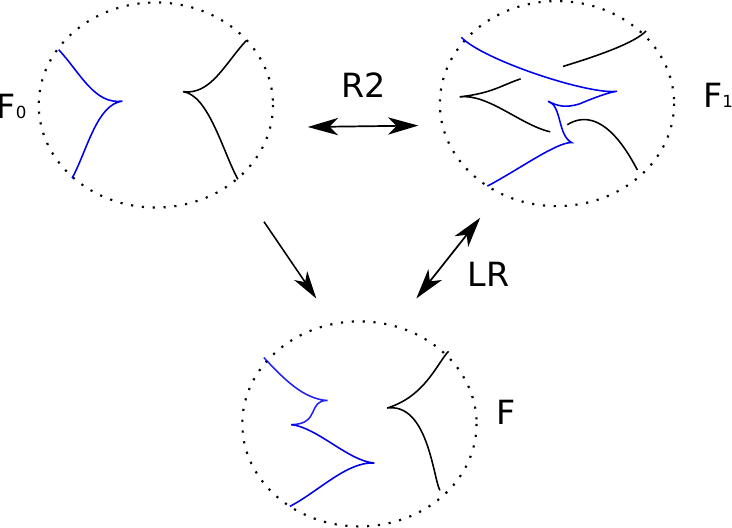}
        \caption{Converting an R2 move into a sequence of LR-moves.}
        \label{fig:CostLR21}
    \end{figure}

    \begin{figure}
        \centering
        \includegraphics[scale=0.5]{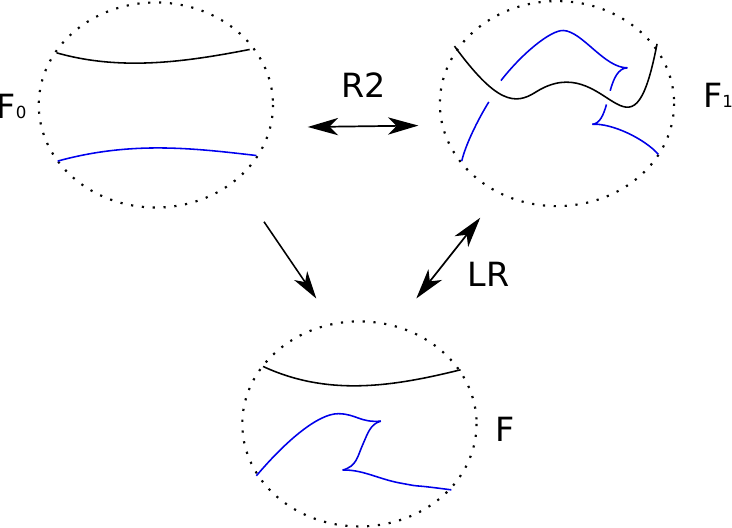}
        \caption{Converting an R2 move into a sequence of LR-moves.}
        \label{fig:CostLR22}
    \end{figure}

 \noindent  \textit{(iii) Conversion of R3 move into a sequence of LR-moves.}

  Now we consider the R3 move. An R3 move involves three arcs (say blue, red and black) and three crossings formed by these arcs. In an R3 move 
  we slide an arc which is under (or over) the other two arcs across a crossing. Depending on the configuration of each crossing we get several diagrams connected by R3 move. 
  For example in Figure \ref{fig:CostLR32} the black arc is under both the blue and
the red arcs. One could also consider a case where the black arc is over the other 
  two arcs or the case where blue arc is under the red arc. Also, note that, in Figure \ref{fig:CostLR32}, topologically, sliding black arc is equivalent to 
  sliding blue arc. Therefore, counting up to rotation of diagrams, there are the following $4$ cases of topological knot diagrams connected by R3 move: 
  (i) blue arc is overcrossing and the black arc is undercrossing, (ii) blue arc is undercrossing and the black arc is overcrossing, (iii) black arc is overcrossing and the red arc is undercrossing, (iv) black arc is undercrossing and the red arc is overcrossing. In the 
  following discussion we only consider front projections connected by R3 moves corresponding to the case (i) since other cases work in a similar manner.

Let $F_0$ and $F_1$ be front projections that are connected by R3 move in $Q$ and are identical outside $Q$. As we have seen in the previous cases, any front diagram will differ by stabilizations and 
destabilizations of strands within $Q$, the rest of the possible cases of front projections $F_0$ and $F_1$ are obtained by a sequence of stabilizations and destabilizations within $Q$. One can give a similar argument, as given for R1 and R2 moves, using strands to justify why are these the only possible cases. Figure \ref{fig:CostLR31} and Figure \ref{fig:CostLR32} illustrate two such cases of 
$F_0$ and $F_1$. The front projections $F_0$ and $F_1$ shown in Figure \ref{fig:CostLR31} are connected by a sequence of LR-moves within the disc $Q$. Also, it can be checked that there exists a Legendrian isotopy supported within $Q$ connecting $F_0$ and $F_1$ shown in Figure \ref{fig:CostLR32}.

 \noindent  \textit{(iv) Conversion of a local rotation into a sequence of LR-moves.}
 Note that certain planar isotopies allowed in the topological category are not allowed in the Legendrian category. Hence we need to consider the case of planar isotopies as well.
 Now consider the case when front projections $F_0$ and $F_1$ are connected by a rotation in a disc $Q$ and identical outside $Q$. All front projections connected by such local planar isotopy can be obtained by stabilizing the front projections shown in Figure \ref{fig:CostP}. The bottom two front projections shown in Figure \ref{fig:CostP} are connected by a sequence of LR2 moves within the disc.
\begin{figure}
        \centering
        \includegraphics[scale=0.5]{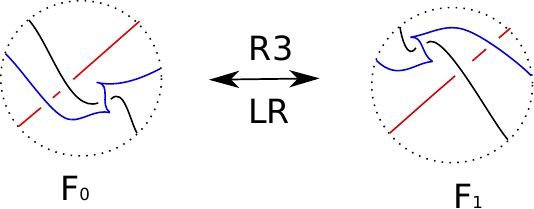}
        \caption{This is one of the cases where an R3 move is an LR3 move.}
        \label{fig:CostLR31}
    \end{figure}

    \begin{figure}
        \centering
        \includegraphics[scale=0.5]{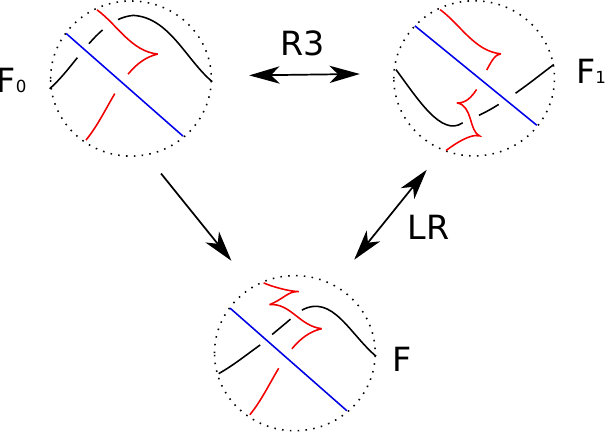}
        \caption{Converting an R3 move into a sequence of LR-moves.}
        \label{fig:CostLR32}
    \end{figure}
    
\begin{figure}
        \centering
        \includegraphics[scale=0.5]{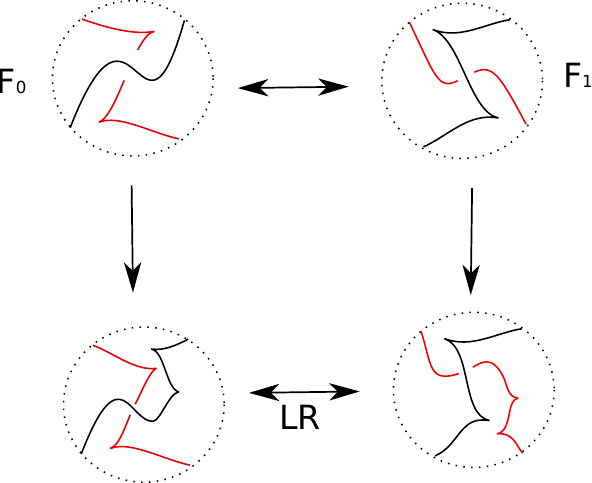}
        \caption{Converting a local rotation into a sequence of LR-moves.}
        \label{fig:CostP}
    \end{figure}

Hence, $F_0$ and $F_1$ can be stabilized sufficiently within $Q$ to obtain Legendrian isotopic knots and this Legendrian isotopy is supported in $Q$.
\end{proof}
\begin{remark} 
\label{finite}
When converting a smooth isotopy to a Legendrian isotopy, we need to add a cusp when we introduce a vertical tangency and add two cusps every time we introduce a crossing that is not allowed in the front projection. From the proof, it is clear that we need finitely many cusps to convert each (smooth) isotopy to Legendrian isotopy.
\end{remark}

\begin{remark}\label{composition}
    Note that two front projections that are connected by a global rotation of knot diagrams can be seen as a composition of planar rotations near crossings and translations of the diagram as illustrated in Figure \ref{fig:costglob1} and Figure \ref{fig:costglob2}.
    \begin{figure}
        \centering
        \includegraphics[scale=0.8]{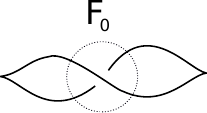}
        \includegraphics[scale=0.8]{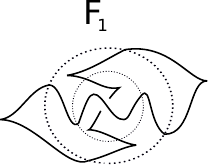}
        \includegraphics[scale=0.8]{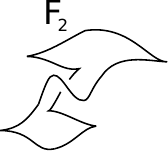}
        \caption{Topologically the projection $F_2$ is obtained by $90^\circ$ rotation of $F_0$. The front projection $F_1$ is obtained from $F_0$ by a local rotation done by adding cusps in a disc near the crossing. Then $F_2$ is obtained by stabilizing $F_0$.}
        \label{fig:costglob1}
    \end{figure}
   
   \begin{figure}
       \centering
       \includegraphics[scale=0.8]{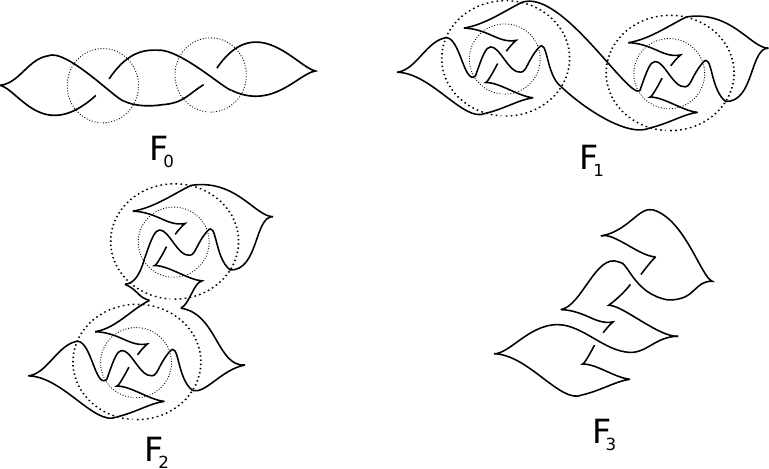}
       \caption{Achieving a global rotation $F_3$ of $F_0$ via a sequence of local planar rotations in discs around crossings in $F_1$.}
     \label{fig:costglob2}
   \end{figure}
\end{remark}

\begin{remark}
    Let $K_0$ and $K_1$ be Legendrian knots with front projections $F_0$ and $F_1$ respectively. The notations $K_0=K_1$ and $F_0=F_1$ will be used to denote that $K_0$ and $K_1$ are of the same Legendrian knot type.
\end{remark}
Let $S^{\pm}_{Q}(F_0)$ denote the front projection obtained by positively or negatively stabilizing $F_0$ within the disc $Q$ respectively. Then we give the following definition for the cost between two front projections in the same topological knot type.

\begin{definition}
    Let $F_0$ and $F_1$ be front projections that differ by an R-move within a disc $Q$ and are identical outside $Q$. Then define the Cost function as follows:
    \begin{align*}
        \Cost(F_0,F_1) := \min  \{m_0+n_0+m_1+n_1:  (S^+_{Q})^{m_0}(S^-_{Q})^{n_0}(F_0)
       =(S^+_{Q})^{m_1}(S^-_{Q})^{n_1}(F_1)\}.
    \end{align*}
        
\end{definition}
\begin{remark}
    The definition makes sense in light of Theorem \ref{CostL1}.
\end{remark}
\begin{Lemma}\label{CostL2}
    Let $F$ and $\Tilde{F}$ be front projections of Legendrian knots of the same topological knot type. Then there exists a finite sequence $SF=\left(F=F_0,F_1,\ldots,F_n,F_{n+1}=\Tilde{F}\right)$ of front projections and discs $Q_i$'s such that $F_i$ and $F_{i+1}$ are identical outside $Q_i$ and are connected by an R-move or local planar isotopies inside $Q_i\: \forall \ 0\leq i \leq n-1,$ for some positive integer $n$. Moreover, $F_n $ and $F_{n+1} $ are connected by stabilizations and destabilizations inside $Q_n$.
\end{Lemma}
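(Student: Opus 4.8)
The plan is to deduce this from the classical Reidemeister theorem applied to the knot diagrams underlying $F$ and $\tilde F$, and then to promote the resulting sequence of Reidemeister moves on diagrams to a sequence of front projections.

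First I would extract a knot diagram from each front: given a front projection, smooth each semicubical cusp very slightly so that it becomes an ordinary point of vertical tangency. This produces genuine knot diagrams $D$ of $K$ and $\tilde D$ of $\tilde K$, and — applying the conversion procedure of Figure \ref{fig:CTF} (place a cusp at each vertical tangency, and add a pair of cusps at each crossing whose overstrand is the steeper one) — one recovers exactly $F$ from $D$ and exactly $\tilde F$ from $\tilde D$, since a valid front already has its overstrands less steep at every crossing and so no extra cusps are created. Because $K$ and $\tilde K$ are ambient isotopic, the Reidemeister theorem gives a finite sequence of diagrams $D = D_0, D_1, \dots, D_m = \tilde D$ in which $D_j$ and $D_{j+1}$ agree outside a disc and differ inside it by a single Reidemeister move or by a rotation; one may assume each $D_j$ is generic and that the strands meet the boundary of the relevant disc transversally and with non-vertical slope.

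Next I would convert every $D_j$ back to a front projection $G_j$ by the same procedure, so that $G_0 = F$ and $G_m = \tilde F$. Since that conversion is entirely local — whether a cusp is inserted at a point depends only on the slope(s) of the strand(s) there — the fronts $G_j$ and $G_{j+1}$ coincide outside the disc $Q_j$ where $D_j$ and $D_{j+1}$ differ and meet its boundary in the same way; inside $Q_j$ their underlying diagrams are $D_j|_{Q_j}$ and $D_{j+1}|_{Q_j}$, which differ by one Reidemeister move or rotation. The two fronts may carry different cusps inside $Q_j$, but those extra cusps are confined to $Q_j$ and create no new crossings, so $G_j$ and $G_{j+1}$ still differ inside $Q_j$ by an R move (resp. a local rotation) in the sense used throughout this section. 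For the rotations appearing in the Reidemeister sequence I would invoke the reduction, noted in the remark following Theorem \ref{CostL1}, of a global rotation of a diagram to a composition of local planar rotations near the crossings together with translations, each supported in a disc; inserting the corresponding intermediate fronts and relabelling the sequence produces the required $SF$.

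The step I expect to be the main obstacle is the bookkeeping in the previous paragraph: checking that the cusps forced by the diagram-to-front conversion never force a crossing to appear or disappear outside the nominal disc, and that the boundary slopes can always be arranged so that $G_j$ and $G_{j+1}$ are literally identical outside $Q_j$. This is where genericity of the $D_j$ and a careful choice of the discs $Q_j$ — enlarging them slightly, if necessary, to absorb any cusps the conversion introduces near the boundary — are used; once this is arranged, the statement follows.
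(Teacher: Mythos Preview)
Your proposal is correct and follows essentially the same route as the paper: extract the topological diagrams, run the classical Reidemeister theorem, and then promote the resulting sequence of diagrams back to front projections using the local diagram-to-front conversion of Figure~\ref{fig:CTF}.

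There is one minor organizational difference worth noting. You convert each intermediate diagram $D_j$ to a front $G_j$ \emph{globally and independently}, and then rely on the locality of the conversion to conclude that $G_j$ and $G_{j+1}$ agree outside $Q_j$; this is why you need the bookkeeping about boundary slopes and possibly enlarging the discs. The paper instead builds the sequence \emph{inductively}: it sets $F_{i+1}$ equal to $F_i$ outside $Q_i$ and only performs the diagram-to-front conversion inside $Q_i$. This makes ``identical outside $Q_i$'' automatic and sidesteps the boundary issues you flag, at the price that the final term $F_n$ need not equal $\tilde F$ on the nose---it only agrees with $\tilde F$ up to stabilizations---so the paper appends one extra step $F_{n+1}=\tilde F$ in which the accumulated stabilizations are gathered into a single disc. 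Your parallel construction avoids that last step because $G_m=\tilde F$ exactly. Either bookkeeping choice works; the underlying idea is the same.
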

\begin{proof}
    There exists a sequence of R-moves and local rotations connecting $F$ to $\Tilde{F}$ by Reidemeister's theorem \cite{R} since $F$ and $\Tilde{F}$ are in the same topological knot 
    type. Fix a sequence of R-moves and local rotations connecting $F$ to $\Tilde{F}$. Denote it by $\mathcal{S}$. 

 Let $D_0=F, D_1,\ldots,D_n=\Tilde{F}$ be a sequence 
 of topological knot diagrams obtained in the sequence $\mathcal{S}$. The knot diagram $D_0$ is identical to $F$. Let $Q_i$ be the disc where $D_i$  and $D_{i+1}$ differ by an R-move or local rotation and are identical outside $Q_i\: \forall \ 0\leq i\leq n-1$. 

Using $D_0,D_1,\ldots,D_n$, construct a sequence $F_0,F_1,\ldots,F_n,F_{n+1}=\Tilde{F}$ such that $F_{i}$ and $F_{i+1}$ are identical outside $Q_{i}\: \forall \ 0\leq i \le n$. Construct $F_1$, from $D_0$ and $D_1$ as follows: Fix $F_1$ to be identical to $D_0$ outside $Q_0$. Inside $Q_0$, fix $F_1$ to be a front projection 
obtained by converting $D_1$ into a front projection without changing the topological knot type as discussed earlier (Refer to Figure \ref{fig:CTF}). Note that inside $Q_1$, we are making a 
choice for $F_1$ as there are many ways to convert a topological knot diagram into a front projection. The front projection $F_1$ obtained by this process is 
identical to $F_0$ outside $Q_0$ and connected to $F_0$ by an R-move or local rotation inside $Q_0$. Repeat the process inside and outside the disc $Q_1$ to construct $F_2$ from $F_1$ and $D_2$. Repeating this process $n$-times we get a 
desired sequence $F_0, F_1,\ldots, F_n$. The front projection $F_n$ is identical to $F_{n-1}$ outside $Q_{n-1}$ and differs from it by an R-move or local rotation inside $Q_{n-1}$. Note that by construction the front projection $F_n$ is 
obtained from $D_n$ and topologically $D_n$ and $\Tilde{F}$ are planar isotopic. Thus, $F_n$ differs from $\Tilde{F}$ only in terms of number of zig-zags. After moving all the zig-zags in $F_n$ and $\Tilde{F}$ to 
a common disc $Q_{n}$, we can assume that $F_n$ and $\Tilde{F}$ are identical outside $Q_{n}$ and differ by some stabilizations in $Q_{n}$. Hence, we have 
the desired sequence $F_0,F_1,\ldots,F_n,F_{n+1}=\Tilde{F}$ of front projections. 
   
\end{proof}

Let $\mathcal{SF}(F,\Tilde{F})$ denote the set of all the sequences of front projections connecting $F$ and $\Tilde{F}$ with the properties mentioned in Lemma \ref{CostL2}. Then we give the following definition: 
\begin{definition}
 Let $F$ and $\Tilde{F}$ be front projections of Legendrian knots of the same topological knot type. Let $SF=\left(F=F_0,F_1,\ldots,F_{n+1}=\Tilde{F}\right)$ be an element from $ \mathcal{SF}(F,\Tilde{F})$. Define

 \begin{align*}
     \Cost(F,\Tilde{F},SF)&:=\sum_{i=0}^{n}\Cost(F_{i},F_{i+1}), \quad \text{and}\\
     \Cost(F,\Tilde{F}):=\min\{&\Cost(F,\Tilde{F},SF):SF\in \mathcal{SF}(F,\Tilde{F})\}.
 \end{align*}
\end{definition}
Note that $\Cost(F,\Tilde{F})$ is well-defined.
\begin{remark}\label{re:Cost1}
    From the definition it can be seen that the Cost function is symmetric, that is, $\Cost(F,\Tilde{F})=\Cost(\Tilde{F},F)$ for all front projections $F$ and $\Tilde{F}$ in the same topological knot type.
\end{remark}

\begin{Lemma}\label{CostL3}
   Let $F$ and $\Tilde{F}$ be front projections of Legendrian knots of the same topological knot type. Let $F_1$ and $\Tilde{F}_1$ be another front projections Legendrian isotopic to $F$ and $\Tilde{F}$, respectively. Then $\Cost(F,\Tilde{F})=\Cost(F_1,\Tilde{F_1})$.
\end{Lemma}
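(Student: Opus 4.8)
The plan is to establish the two inequalities $\Cost(F_1,\Tilde F_1)\le \Cost(F,\Tilde F)$ and $\Cost(F,\Tilde F)\le \Cost(F_1,\Tilde F_1)$ separately, the second following from the first by symmetry of Legendrian isotopy. The engine behind both is the observation that a Legendrian isotopy between two front projections contributes nothing to the cost: if $G$ and $G'$ are Legendrian isotopic front projections, then $\Cost(G,G')=0$. To see this, I would invoke the Legendrian Reidemeister theorem of \cite{SJ} to write the Legendrian isotopy as a finite sequence of front projections $G=G_0,G_1,\dots,G_k=G'$ in which consecutive terms are related, inside a disc and identically outside it, by an LR$1$, LR$2$, or LR$3$ move or by a regular homotopy of fronts. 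Each such move is, when passed to the underlying knot diagram, an R move or a local rotation, so each pair $(G_j,G_{j+1})$ lies in the domain of $\Cost$ and the whole sequence belongs to $\mathcal{SF}(G,G')$ (which is nonempty by Lemma \ref{CostL2}). Since an LR move or a regular homotopy of fronts is itself a Legendrian isotopy supported in the disc, $G_j$ and $G_{j+1}$ represent the same Legendrian knot type, so taking $m_0=n_0=m_1=n_1=0$ in the definition of $\Cost(G_j,G_{j+1})$ gives $\Cost(G_j,G_{j+1})=0$; summing over $j$ shows that this sequence has total cost $0$, and as $\Cost$ is the minimum of a set of sums of nonnegative integers, $\Cost(G,G')=0$.

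With this in hand, the first inequality is a concatenation argument. Fix an arbitrary $SF=(F=F_0,F_1,\dots,F_{n+1}=\Tilde F)\in\mathcal{SF}(F,\Tilde F)$. Using the Legendrian isotopies $F_1\simeq F$ and $\Tilde F\simeq\Tilde F_1$, produce as above a sequence $SG=(F_1=G_0,\dots,G_p=F)$ and a sequence $SH=(\Tilde F=H_0,\dots,H_q=\Tilde F_1)$, each with all consecutive costs equal to $0$. Their concatenation $SG\ast SF\ast SH$, glued at $G_p=F_0$ and at $F_{n+1}=H_0$, is again a sequence whose consecutive terms differ by an R move or local rotation inside a disc and agree outside it, hence lies in $\mathcal{SF}(F_1,\Tilde F_1)$; moreover $\Cost(F_1,\Tilde F_1, SG\ast SF\ast SH)=0+\Cost(F,\Tilde F,SF)+0=\Cost(F,\Tilde F,SF)$. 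Taking the infimum over all choices of $SF$ gives $\Cost(F_1,\Tilde F_1)\le\Cost(F,\Tilde F)$. Since Legendrian isotopy is an equivalence relation we also have $F\simeq F_1$ and $\Tilde F\simeq\Tilde F_1$, so the same argument with the two pairs interchanged yields the reverse inequality, and equality follows.

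The only step requiring care is the claim used in the first paragraph, that every elementary step of the Legendrian isotopy supplied by \cite{SJ} — in particular a ``regular homotopy of fronts'' — can be localized to a disc and treated as an R move or local rotation admissible in the definition of $\Cost$, with cost zero. This is precisely the principle underlying Theorem \ref{CostL1} and the remarks following it: an isotopy of front projections decomposes into disc-supported pieces, each of which changes neither the topological nor the Legendrian knot type and hence needs no stabilization. I expect this to be the main (though essentially routine) obstacle; once it is granted, the remainder is bookkeeping with concatenation of sequences in $\mathcal{SF}$ together with the nonnegativity and minimality built into the definition of $\Cost$.
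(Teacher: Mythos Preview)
Your proposal is correct and follows essentially the same route as the paper: both arguments concatenate a sequence of LR moves realising the Legendrian isotopies $F_1\simeq F$ and $\Tilde F\simeq\Tilde F_1$ (each step contributing zero cost) with an arbitrary $SF\in\mathcal{SF}(F,\Tilde F)$, deduce $\Cost(F_1,\Tilde F_1)\le\Cost(F,\Tilde F,SF)$ for every $SF$, and then invoke symmetry. You are simply more explicit than the paper in isolating the ``Legendrian isotopy has cost zero'' step and in flagging the localisation of regular homotopy as the one point needing care, which the paper's proof glosses over.
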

\begin{proof}
    Let $S$ and $\Tilde{S}$ be sequences of LR-moves connecting $F_1$ to $F$ and $\Tilde{F}$ to $\Tilde{F}_1$, respectively. Then $S\in \mathcal{SF}(F_1, F)$ and $S_1\in\mathcal{SF}(\Tilde{F},\Tilde{F}_1)$. Let $SF$ be an arbitrary element of $ \mathcal{SF}(F,\Tilde{F})$. Let $X=\left(S,SF,S_1\right)$, then $X\in\mathcal{SF}(F_1,\Tilde{F}_1)$ and $\Cost(F_1,\Tilde{F}_1,X)=\Cost(F,\Tilde{F},SF)$. Thus we have $\Cost(F_1,\Tilde{F}_1)\leq \Cost(F,\Tilde{F},SF) \: \forall SF\in \mathcal{SF}(F,\Tilde{F})$. Hence, $\Cost(F_1,\Tilde{F}_1)\leq \Cost(F,\Tilde{F})$.

    Similarly, it can be shown that $\Cost(F,\Tilde{F})\leq \Cost(F_1,\Tilde{F}_1)$. Hence, $\Cost(F,\Tilde{F})=\Cost(F_1,\Tilde{F}_1)$.
\end{proof}
In light of the above Lemma, we can introduce the Cost function for the \emph{Legendrian knot type}. 
\begin{definition}
    Let $K$ and $\Tilde{K}$ be Legendrian knots of the same topological knot type. Define $\Cost(K,\Tilde{K}):= \Cost(F,\Tilde{F}),$ where $F$ and $\Tilde{F}$ are front projections of $K$ and $\Tilde{K}$, respectively.
\end{definition}

\begin{example}
Let $F_0$, $F_1$ and $F_2$ be the Legendrian unknots as shown in Figure \ref{CU}, then $\Cost(F_0,F_1)=1,$ $\Cost(F_1,F_2)=1$ and $\Cost(F_0,F_2)=2.$ As we will see in Lemma \ref{CostL8}, $\Cost(F_0,F_2)\geq 2$
    
    \begin{figure}[!htbp]
       \centering
       \includegraphics[width=0.5\textwidth]{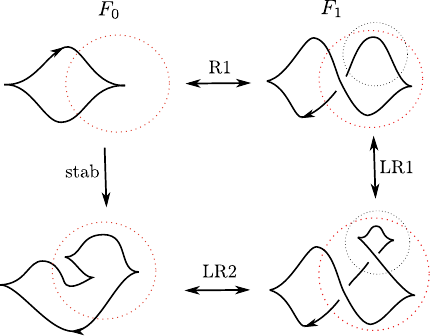}
       \hspace{30pt}
       \includegraphics[width=0.7\textwidth]{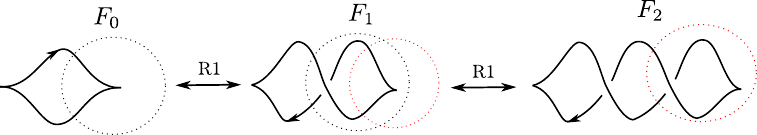}
       \caption{Cost associated with two Legendrian unknots.}
       \label{CU}
   \end{figure}
\end{example}

\begin{theorem}\label{CostL4}
Cost$(K,\Tilde{K})=0$ if and only if $K$ is Legendrian isotopic to $\Tilde{K}$.
\end{theorem}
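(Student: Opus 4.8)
The plan is to prove both implications directly from the definitions. The backward direction is essentially immediate: if $K$ is Legendrian isotopic to $\tilde K$, pick front projections $F$ of $K$ and $\tilde F$ of $\tilde K$. A Legendrian isotopy between them is, by the Legendrian Reidemeister theorem quoted from \cite{SJ}, realized by a sequence of LR moves and regular homotopy of fronts. I would argue that such a sequence can be viewed as an element $SF \in \mathcal{SF}(F,\tilde F)$ in which every constituent $F_i \to F_{i+1}$ is itself a Legendrian isotopy supported in a disc $Q_i$; for each such step the pair $(F_i, F_{i+1})$ satisfies $(S^+_{Q_i})^0(S^-_{Q_i})^0(F_i) = (S^+_{Q_i})^0(S^-_{Q_i})^0(F_{i+1})$ because $F_i = F_{i+1}$ as Legendrian knot types (using the convention of the preceding remark), so $\Cost(F_i,F_{i+1}) = 0$. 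Summing gives $\Cost(F,\tilde F, SF) = 0$, hence $\Cost(K,\tilde K) = 0$. One subtlety to address: the definition of $\Cost(F_i,F_{i+1})$ is stated for fronts differing by an R move in a disc; I would note that an LR move is in particular realized by such a disc picture with $0$ added cusps, or simply extend the convention so that Legendrian-isotopic-within-a-disc pairs are allowed with cost $0$, which is consistent with Lemma \ref{CostL3}.

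For the forward direction, suppose $\Cost(K,\tilde K) = 0$. Since the cost is a minimum of non-negative integers over the non-empty set $\mathcal{SF}(F,\tilde F)$ (non-empty by Lemma \ref{CostL2}), there exists a sequence $SF = (F = F_0, F_1, \dots, F_{n+1} = \tilde F)$ with $\Cost(F,\tilde F, SF) = \sum_{i=0}^{n} \Cost(F_i, F_{i+1}) = 0$. As each summand is a non-negative integer, $\Cost(F_i, F_{i+1}) = 0$ for every $i$. I then claim that $\Cost(F_i, F_{i+1}) = 0$ forces $F_i$ and $F_{i+1}$ to be Legendrian isotopic: by definition of $\Cost(F_i,F_{i+1})$ there are non-negative integers $m_0, n_0, m_1, n_1$ with $m_0 + n_0 + m_1 + n_1 = 0$, hence all zero, such that $(S^+_{Q_i})^{m_0}(S^-_{Q_i})^{n_0}(F_i) = (S^+_{Q_i})^{m_1}(S^-_{Q_i})^{n_1}(F_{i+1})$; with all exponents zero this reads $F_i = F_{i+1}$, i.e. they are the same Legendrian knot type. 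Composing these $n+1$ Legendrian isotopies yields a Legendrian isotopy from $F$ to $\tilde F$, and therefore from $K$ to $\tilde K$.

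The main point requiring care — and the only real obstacle — is the interface between the combinatorial bookkeeping (sequences in $\mathcal{SF}$, discs $Q_i$, stabilization counts) and the geometric notion of Legendrian isotopy. Specifically, I must make sure that: (a) a genuine Legendrian isotopy between $F$ and $\tilde F$ can be cut into disc-supported elementary steps so that it furnishes a bona fide element of $\mathcal{SF}(F,\tilde F)$ of total cost $0$ — this uses the Legendrian Reidemeister theorem together with the observation, already implicit in Theorem \ref{CostL1} and its following remarks, that LR moves and regular homotopies are supported in discs; and (b) conversely, that the equalities "$=$" appearing in the definition of $\Cost$ denote equality of Legendrian knot types (as fixed by the remark preceding the definition of $S^\pm_Q$), so that cost $0$ literally packages a chain of Legendrian isotopies. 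Once (a) and (b) are pinned down, the theorem follows purely formally from the fact that a sum of non-negative integers vanishes iff each term does. I would also invoke Lemma \ref{CostL3} to reassure the reader that the statement is independent of the chosen front projections $F$, $\tilde F$, so that $\Cost(K,\tilde K)$ is well defined in the first place.
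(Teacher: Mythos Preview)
Your proposal is correct and follows essentially the same route as the paper: for the backward direction you exhibit a sequence of LR moves as an element of $\mathcal{SF}(F,\Tilde F)$ with total cost $0$, and for the forward direction you use that a vanishing sum of non-negative integers forces each $\Cost(F_i,F_{i+1})=0$, hence all exponents in the defining equality are zero and $F_i=F_{i+1}$ as Legendrian types. The paper's proof is terser and does not pause over the definitional points (a) and (b) you flag, but the logical skeleton is identical.
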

\begin{proof}
     Let $F$ and $\Tilde{F}$ denote the front projections of  $K$ and $\Tilde{K}$, respectively. If $K$ and $\Tilde{K}$ are Legendrian isotopic then there exists a sequence $SF_0$ of LR-moves connecting $F$ to $\Tilde{F}$. Therefore, $\Cost(F,\Tilde{F})\leq \Cost(F,\Tilde{F},SF_0)=0$. Hence, $\Cost(K,\Tilde{K})=0$.

     Now, assume $\Cost(K,\Tilde{K})=0$. Then there exists a sequence $SF_0\in \mathcal{SF}(F,\Tilde{F})$ given by $(F=F_0,F_1,\cdots,F_n,F_{n+1}=\Tilde{F})$ such that the $\Cost(F,\Tilde{F},SF_0)=0$. Since, $\Cost(\cdot,\cdot)$ is a non-negative integer-valued function, we get $\Cost(F_{i},F_{i+1})=0 \: \forall 0\leq i\leq n$. Thus $F_i$ is Legendrian isotopic to $F_{i+1}$ $\forall 0\leq i\leq n$. Hence, $F$ is Legendrian isotopic to $\Tilde{F}.$ 
\end{proof}

\begin{Lemma}\label{CostL5}
     Let $F$ and $\Tilde{F}$ be front projections of the same topological knot type. Then there exist positive integers $p,q,r,s$ such that  $\Cost(F,\Tilde{F})=p+q+r+s$ and $(S^+)^p(S^-)^r(F)=(S^+)^q(S^-)^s(\Tilde{F})$. 
\end{Lemma}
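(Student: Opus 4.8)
The plan is to produce $p$ and $q$ by collecting, into a single disc, all of the stabilizations that are used to witness the local costs along a cost-minimizing sequence of front projections, and then reading off the total counts.

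First I would fix a sequence $SF=(F=F_0,F_1,\dots,F_{n+1}=\tilde F)\in\mathcal{SF}(F,\tilde F)$ realizing the minimum, so that $\Cost(F,\tilde F,SF)=\Cost(F,\tilde F)$; this is legitimate since $\mathcal{SF}(F,\tilde F)$ is nonempty by Lemma~\ref{CostL2} and $\Cost(F,\tilde F,\cdot)$ is $\Z_{\geq 0}$-valued, so the minimum is attained. Next, for each $i$ I would invoke the definition of $\Cost(F_i,F_{i+1})$ to get non-negative integers $m_i,n_i,m_i',n_i'$ with
\[
(S^+_{Q_i})^{m_i}(S^-_{Q_i})^{n_i}(F_i)\ \iso\ (S^+_{Q_i})^{m_i'}(S^-_{Q_i})^{n_i'}(F_{i+1}),
\qquad m_i+n_i+m_i'+n_i'=\Cost(F_i,F_{i+1}).
\]
Abbreviating $a_i=m_i+n_i$, $b_i=m_i'+n_i'$, this says that applying $a_i$ stabilizations (with prescribed signs) to $F_i$ and $b_i$ to $F_{i+1}$ yields Legendrian isotopic knots. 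Finally I would chain these relations: writing $A_j=\sum_{i<j}a_i$ and $B_j=\sum_{i<j}b_i$, I claim by induction on $j$ that $S^{A_j}(F)\iso S^{B_j}(F_j)$, where $S^{A_j}$ denotes $A_j$ stabilizations of the appropriate signs (the disc in which a stabilization is performed, and the order of positive and negative stabilizations, being irrelevant up to Legendrian isotopy, as recalled in Section~2). The base case $j=0$ is trivial. For the inductive step, stabilize both sides of $S^{A_j}(F)\iso S^{B_j}(F_j)$ a further $a_j$ times; since stabilizations commute with one another and with Legendrian isotopies up to Legendrian isotopy, the right-hand side becomes $S^{B_j}(S^{a_j}(F_j))\iso S^{B_j}(S^{b_j}(F_{j+1}))=S^{B_j+b_j}(F_{j+1})$, so $S^{A_j+a_j}(F)\iso S^{B_j+b_j}(F_{j+1})$. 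Taking $j=n+1$ and setting $p=A_{n+1}=\sum_{i=0}^n a_i$, $q=B_{n+1}=\sum_{i=0}^n b_i$ gives $S^p(F)\iso S^q(\tilde F)$ with
\[
p+q=\sum_{i=0}^n(a_i+b_i)=\sum_{i=0}^n\Cost(F_i,F_{i+1})=\Cost(F,\tilde F,SF)=\Cost(F,\tilde F).
\]

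The step I expect to demand the most care is the bookkeeping in this induction. The zigzags witnessing the various local costs are created in different discs $Q_0,\dots,Q_n$ scattered along the sequence, and one must be sure that amalgamating them into one disc and dragging them through the intermediate Legendrian isotopies changes neither their number nor their signs — this is exactly the standing fact (recalled in the preliminaries) that a stabilization is independent of its location up to Legendrian isotopy, hence that stabilizations commute, applied repeatedly; having the inductive hypothesis carry the precise count, rather than merely "some number of stabilizations", is what keeps the argument honest. One may also check (not needed for the statement) that these $p,q$ are optimal: if $S^{p'}(F)\iso S^{q'}(\tilde F)$ then $F$ and $S^{p'}(F)$ differ inside a disc only by a topologically trivial modification, so $\Cost(F,S^{p'}(F))\le p'$ and likewise $\Cost(\tilde F,S^{q'}(\tilde F))\le q'$, and splicing the corresponding short sequences (together with an LR-move sequence realizing $S^{p'}(F)\iso S^{q'}(\tilde F)$ at cost $0$) gives a sequence in $\mathcal{SF}(F,\tilde F)$ of total cost $\le p'+q'$; thus $p+q$ minimizes $p'+q'$ over all such pairs, and both $p$ and $q$ are positive unless one of $F,\tilde F$ is Legendrian isotopic to a stabilization of the other.
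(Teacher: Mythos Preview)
Your proof is correct and follows essentially the same approach as the paper: choose a cost-realizing sequence, extract the local stabilization counts at each step, and telescope them using the fact that stabilizations are location-independent up to Legendrian isotopy. The paper writes the telescoping as a single displayed chain of equalities rather than an explicit induction, but the content is identical. Your added final paragraph, showing that the resulting $p+q$ is in fact the minimum over all pairs $(p',q')$ with $S^{p'}(F)\iso S^{q'}(\tilde F)$, is not part of this lemma in the paper but is exactly the content of the subsequent Lemma~\ref{CostL6} and the remark following it; so you have anticipated that as well.
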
   
\begin{proof}
  Let $\Cost(F,\Tilde{F})=\Cost(F,\Tilde{F},SF_0)$ for some $SF_0\in \mathcal{SF}(F,\Tilde{F})$. Let $SF_0$ be given by $\left(F=F_0,F_1,\cdots,F_n,F_{n+1}=\Tilde{F}\right)$. Let $Q_{i}$ be the disc such that $F_i$ and $F_{i+1}$ differ by an R-move or local planar isotopy within $Q_i$, $ 0\leq i \leq n$. Then there exist positive integers $p_i,q_i,r_i,s_i$ such that $\Cost(F_i,F_{i+1})=p_{i}+q_{i}+r_i+s_i$ and $(S^+_{Q_i})^{p_{i}}(S^-_{Q_i})^{r_i}(F_i)=(S^+_{Q_i})^{q_{i}}(S^-_{Q_i})^{s_{i}}(F_{i+1})$ for all $ 0\leq i \leq n$ and $\Cost(F,\Tilde{F})=\sum_{i=0}^n(q_{i}+p_{i}+r_i+s_i)$. Now we have
  \begin{align*}
      &(S^+)^{p_n}(S^-)^{r_n}(S^+)^{p_{n-1}}(S^-)^{r_{n-1}}\cdots (S^+)^{p_1}(S^-)^{r_1}(S^+)^{p_{0}}(S^-)^{r_0}(F)\\
      &=(S^+)^{p_{n}}(S^-)^{r_n}(S^+)^{p_{n-1}}(S^-)^{r_{n-1}}\cdots (S^+)^{p_{1}}(S^-)^{r_1} (S^+)^{q_0}(S^-)^{s_0}(F_1)\\
      &=(S^+)^{q_0}(S^-)^{s_0}(S^+)^{p_{n}}(S^-)^{r_n}(S^+)^{p_{n-1}}(S^-)^{r_{n-1}}\cdots (S^+)^{p_{1}}(S^-)^{r_1}(F_1)\\
      &=(S^+)^{q_0}(S^-)^{s_0}(S^+)^{p_{n}}(S^-)^{r_n}(S^+)^{p_{n-1}}(S^-)^{r_{n-1}}\cdots (S^+)^{q_{1}}(S^-)^{s_1}(F_2)\\
      &\vdots \\
      &=(S^+)^{q_0}(S^-)^{s_0}(S^+)^{q_1}(S^-)^{s_1}\cdots (S^+)^{q_{n}}(S^-)^{s_n}(\Tilde{F}).
  \end{align*}
  Then $p=\sum_{i=0}^np_i, q=\sum_{i=0}^nq_i, r=\sum_{i=0}^nr_i$ and $s=\sum_{i=0}^ns_i$ are the desired integers.
\end{proof}
\begin{proposition}\label{minpropn}
For two front projections $F$ and $\Tilde{F}$ of the same topological knot type, $$\Cost(F,\Tilde{F})=\min\{m+n+p+q:(S^+)^m(S^-)^n(F)=(S^+)^p(S^-)^q(\Tilde{F})\}.$$
\end{proposition}
\begin{remark}
    The above lemma gives a \emph{quantitative} version of the theorem of Fuchs and Tabachnikov.
\end{remark}

Before proving this Proposition, we need the following Lemma.

\begin{Lemma}\label{CostL6}
    Let $F$ and $\Tilde{F}$ be front projections of the same topological knot type. Let $p,n,\Tilde{p},\Tilde{n}$ be positive integers such that $(S^+)^p(S^-)^n(F)$ is Legendrian isotopic to  $(S^+)^{\Tilde{p}}(S^-)^{\Tilde{n}}(\Tilde{F})$. Then $\Cost(F,\Tilde{F})\leq p+n+\Tilde{p}+\Tilde{n}.$
\end{Lemma}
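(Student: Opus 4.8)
The plan is to write down one explicit sequence $SF\in\mathcal{SF}(F,\Tilde{F})$ with $\Cost(F,\Tilde{F},SF)\le p+n+\Tilde{p}+\Tilde{n}$; since $\Cost(F,\Tilde{F})$ is by definition the minimum of $\Cost(F,\Tilde{F},\,\cdot\,)$ over $\mathcal{SF}(F,\Tilde{F})$, this is exactly the claimed inequality. The sequence $SF$ will be a concatenation of three blocks: a short block that stabilizes $F$ inside one disc, a block of cost $0$ coming from a Legendrian isotopy, and a short block that destabilizes down to $\Tilde{F}$ inside one disc.

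First I would fix a disc $Q$ meeting $F$ in a single arc and set $G':=(S^+_Q)^p(S^-_Q)^n(F)$, with all these stabilizations now literally supported in $Q$. Because a stabilization is independent of its location up to Legendrian isotopy, $G'$ is Legendrian isotopic to $(S^+)^p(S^-)^n(F)$, hence --- by hypothesis --- to $(S^+)^{\Tilde{p}}(S^-)^{\Tilde{n}}(\Tilde{F})$, and hence, after fixing a disc $\Tilde{Q}$ meeting $\Tilde{F}$ in a single arc, to $\Tilde{G}':=(S^+_{\Tilde{Q}})^{\Tilde{p}}(S^-_{\Tilde{Q}})^{\Tilde{n}}(\Tilde{F})$. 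As $G'$ and $\Tilde{G}'$ are Legendrian isotopic (and topologically isotopic), Theorem \ref{CostL4} gives $\Cost(G',\Tilde{G}')=0$, so there is a sequence $SF_*\in\mathcal{SF}(G',\Tilde{G}')$ with $\Cost(G',\Tilde{G}',SF_*)=0$.

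Next I would set
\[
SF\ :=\ \Bigl(\,\bigl(F,\ (S^-_Q)^n(F),\ G'\bigr),\ SF_*,\ \bigl(\Tilde{G}',\ (S^-_{\Tilde{Q}})^{\Tilde{n}}(\Tilde{F}),\ \Tilde{F}\bigr)\,\Bigr),
\]
the concatenation of the three indicated sequences along their shared endpoints $G'$ and $\Tilde{G}'$ (the same bookkeeping as in the proof of Lemma \ref{CostL3}). Each consecutive pair I have inserted --- $F$ and $(S^-_Q)^n(F)$, then $(S^-_Q)^n(F)$ and $G'$, then $\Tilde{G}'$ and $(S^-_{\Tilde{Q}})^{\Tilde{n}}(\Tilde{F})$, then $(S^-_{\Tilde{Q}})^{\Tilde{n}}(\Tilde{F})$ and $\Tilde{F}$ --- consists of two front projections that agree outside one disc ($Q$ for the first two, $\Tilde{Q}$ for the last two) and differ inside it only by stabilizations, so $SF$ is again an element of $\mathcal{SF}(F,\Tilde{F})$ of the kind produced in Lemma \ref{CostL2}. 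For any such ``stabilize inside a disc'' pair, reading the defining formula of $\Cost$ for a pair with the remaining three exponents set to $0$ gives $\Cost(F,(S^-_Q)^n(F))\le n$, $\Cost((S^-_Q)^n(F),G')\le p$, $\Cost(\Tilde{G}',(S^-_{\Tilde{Q}})^{\Tilde{n}}(\Tilde{F}))\le\Tilde{p}$, and $\Cost((S^-_{\Tilde{Q}})^{\Tilde{n}}(\Tilde{F}),\Tilde{F})\le\Tilde{n}$. Adding these to the contribution $0$ of the internal pairs of $SF_*$ yields $\Cost(F,\Tilde{F})\le\Cost(F,\Tilde{F},SF)\le n+p+\Tilde{p}+\Tilde{n}$.

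The only delicate point --- and the one I would single out as the main obstacle --- is the splicing at the two junctions: the first, respectively last, term of $SF_*$ must be \emph{literally} the front projection $G'$, respectively $\Tilde{G}'$, occurring in the outer blocks. This is precisely why I pass from the abstract knots $(S^\pm)^{\bullet}(\cdot)$ in the hypothesis to the concrete front projections $G',\Tilde{G}'$ obtained by stabilizing inside the fixed discs $Q,\Tilde{Q}$, using location-independence of stabilizations up to Legendrian isotopy to connect the two. Everything else is routine: Theorem \ref{CostL1} guarantees that all the pairwise costs appearing here are finite in the first place, and the bound on a pure stabilization step is immediate from the definition.
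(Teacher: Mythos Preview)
Your argument is correct and essentially identical to the paper's: both stabilize $F$ inside one disc, stabilize $\Tilde{F}$ inside another, connect the two results by a sequence of LR moves of cost $0$, and concatenate. The only cosmetic differences are that you split each stabilization block into two substeps (negative then positive) and record $\le$ for each, whereas the paper does each block in a single step and claims $\Cost(F,F_1)=p+n$, $\Cost(F_2,\Tilde{F})=\Tilde{p}+\Tilde{n}$ directly.
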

\begin{proof}
    Let $F_1$ be a front projection obtained by applying $p$ positive and $n$ negative stabilizations on an arc $\alpha$ of $F$ in a disc $Q_0$. Let $F_2$  be a front projection obtained by applying $\Tilde{p}$ positive and $\Tilde{n}$ negative stabilizations on an arc $\beta$ of $\Tilde{F}$ in a disc $Q_2$. Then $\Cost(F,F_1)=p+n$ and $\Cost(F_2,\Tilde{F})=\Tilde{p}+\Tilde{n}$ since $(S^+_{Q_0})^{p}(S^-_{Q_0})^n(F)= F_1$ and $F_2= (S^+_{Q_2})^{\Tilde{p}}(S^-_{Q_2})^{\Tilde{n}}(\Tilde{F})$. Since stabilization is independent of its location in a front projection $F_1$ is Legendrian isotopic to $(S^+)^p(S^-)^n(F)$ and $F_2$ is Legendrian isotopic to $(S^+)^{\Tilde{p}}(S^-)^{\Tilde{n}}(\Tilde{F})$ which implies that $F_1$ and $F_2$ are Legendrian isotopic. Let $X$ be a sequence of LR-moves connecting $F_1$ to $F_2$ and let $(X)$ denote the sequence of front projections obtained by applying $X$ to $F_1$. Then the sequence $SF_0:=\left(F,(X),\Tilde{F}\right) \in \mathcal{SF}(F,\Tilde{F})$ and
    \begin{align*}
\Cost(F,\Tilde{F},SF_0)&=\Cost(F,F_1)+\Cost(F_2,\Tilde{F})+\Cost(F_1,F_2,(X))\\
&=p+n+\Tilde{p}+\Tilde{n}+0.
    \end{align*}
    Hence, $\Cost(F,\Tilde{F})\leq p+n+\Tilde{p}+\Tilde{n}. $ 
\end{proof}
\begin{proof}[Proof of Proposition \ref{minpropn}]

The Lemma \ref{CostL5} gives the existence of such $m,n, p$ and $q$. The Lemma \ref{CostL6} gives the upper bound for the same. From Lemmas \ref{CostL5}, \ref{CostL6} our proposition follows.
\end{proof}

\begin{Lemma}\label{CostL7}
If $tb(K)=tb(\Tilde{K})$, then $\Cost(K,\Tilde{K})$ is even. 
\end{Lemma}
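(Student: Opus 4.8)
The plan is to deduce this directly from the characterization of the Cost function recorded in the remark following Lemma~\ref{CostL6},
\[
\Cost(F,\Tilde{F})=\min\{m+n:\ S^m(F)\text{ is Legendrian isotopic to }S^n(\Tilde{F})\},
\]
combined with the elementary fact that a single stabilization, whether positive or negative, always changes the Thurston--Bennequin number by exactly $-1$.

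First I would fix front projections $F$ and $\Tilde{F}$ of $K$ and $\Tilde{K}$, so that by definition $\Cost(K,\Tilde{K})=\Cost(F,\Tilde{F})$ and $tb(F)=tb(K)$, $tb(\Tilde{F})=tb(\Tilde{K})$. By Lemma~\ref{CostL5} (the quantitative Fuchs--Tabachnikov statement) the set $\{m+n:\ S^m(F)\iso S^n(\Tilde{F})\}$ is nonempty, so $\Cost(F,\Tilde{F})$ is the minimum of a nonempty set of non-negative integers. Now take any $m,n$ with $S^m(F)$ Legendrian isotopic to $S^n(\Tilde{F})$, where $S^m$ and $S^n$ denote words of $m$ and $n$ stabilizations of arbitrary signs. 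Since each stabilization lowers $tb$ by $1$, we get $tb(F)-m=tb(S^m(F))=tb(S^n(\Tilde{F}))=tb(\Tilde{F})-n$, using that Legendrian isotopic knots have equal Thurston--Bennequin number. Together with the hypothesis $tb(K)=tb(\Tilde{K})$ this forces $m=n$, so $m+n=2m$ is even.

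Hence every element of the set over which the minimum in the displayed formula is taken is even, and therefore so is its minimum; that is, $\Cost(K,\Tilde{K})$ is even. There is no real obstacle here: the only point worth noting is that the signs of the individual stabilizations are immaterial, since every stabilization contributes $-1$ to $tb$, so the parity argument goes through regardless of how many of the $m+n$ stabilizations are positive and how many are negative.
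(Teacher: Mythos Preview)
Your proof is correct and follows essentially the same approach as the paper: both use the characterization $\Cost(K,\Tilde{K})=\min\{m+n:S^m(K)\text{ Legendrian isotopic to }S^n(\Tilde{K})\}$ together with the fact that each stabilization lowers $tb$ by $1$, so $tb(K)=tb(\Tilde{K})$ forces $m=n$ and hence $m+n$ is even. Your version is slightly more thorough in observing that \emph{every} element of the set is even, whereas the paper just works with the minimizing pair, but the argument is the same.
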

\begin{proof}
   Let $n$ and $\Tilde{n}$ be the minimum number of stabilizations required on $K$ and $\Tilde{K}$ to obtain Legendrian isotopic knots $K'$ and $\Tilde{K}'$ respectively. Then $tb(K)-n =tb(K')=tb(\Tilde{K}')=tb(\Tilde{K})-\Tilde{n}$. Hence, $n=\Tilde{n}$ and $\Cost(K,\Tilde{K})=2n.$
\end{proof}

\begin{Lemma}\label{CostL8}
    Let $K$ and $\Tilde{K}$ be Legendrian knots of the same topological knot type. Then $$\Cost(K,\Tilde{K})\geq \max\{\abs{tb(K)-tb(\Tilde{K})},\abs{rot(K)-rot(\Tilde{K})}\}.$$
\end{Lemma}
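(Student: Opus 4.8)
The plan is to turn the definition of cost into an explicit count of stabilizations and then simply track how $tb$ and $rot$ respond. First I would pass to front projections $F$ and $\tilde F$ of $K$ and $\tilde K$, so that $\Cost(K,\tilde K)=\Cost(F,\tilde F)$, and invoke Lemma \ref{CostL5}: there are integers $p,q\ge 0$ with $\Cost(F,\tilde F)=p+q$ and $(S)^p(F)=(S)^q(\tilde F)$ as Legendrian knot types. Splitting each batch of stabilizations into its positive and negative parts (stabilizations commute up to Legendrian isotopy and are independent of location in the front), this gives non\nb{}negative integers $m,n,\tilde m,\tilde n$ with $m+n=p$ and $\tilde m+\tilde n=q$ such that $(S^+)^m(S^-)^n(F)$ is Legendrian isotopic to $(S^+)^{\tilde m}(S^-)^{\tilde n}(\tilde F)$.

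Next I would use the classical facts recalled in the Preliminaries: a stabilization lowers $tb$ by $1$, a positive stabilization raises $rot$ by $1$ while a negative one lowers it by $1$, and both $tb$ and $rot$ are invariants of Legendrian isotopy. Evaluating these on the two Legendrian isotopic fronts above yields
\[ tb(K)-(m+n)=tb(\tilde K)-(\tilde m+\tilde n), \qquad rot(K)+(m-n)=rot(\tilde K)+(\tilde m-\tilde n). \]
From the first identity, $tb(K)-tb(\tilde K)=(m+n)-(\tilde m+\tilde n)=p-q$, hence $|tb(K)-tb(\tilde K)|=|p-q|\le p+q=\Cost(K,\tilde K)$. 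From the second, $rot(K)-rot(\tilde K)=(\tilde m+n)-(\tilde n+m)$; since $\tilde m+n$ and $\tilde n+m$ are non\nb{}negative, the absolute value of their difference is at most their sum $m+n+\tilde m+\tilde n=\Cost(K,\tilde K)$. Taking the maximum of the two bounds gives the stated inequality.

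I do not anticipate a real obstacle here: the argument is essentially bookkeeping. The two points that need care are (i) correctly separating the positive from the negative stabilizations on each side, so that the $rot$ estimate genuinely presents $rot(K)-rot(\tilde K)$ as a difference of two non\nb{}negative integers whose sum is the cost; and (ii) citing Lemma \ref{CostL5} (which realizes the cost exactly) rather than Lemma \ref{CostL6} (which only gives an upper bound and would point the inequality the wrong way), together with the remark identifying $\Cost$ with a minimal total stabilization count. It is also worth noting explicitly that when $K$ and $\tilde K$ are already Legendrian isotopic both sides are $0$, consistent with Theorem \ref{CostL4}.
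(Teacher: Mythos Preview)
Your proof is correct and follows essentially the same route as the paper's: both realize the cost as a total number of positive and negative stabilizations $m+n+\tilde m+\tilde n$ making the two sides Legendrian isotopic, then read off $|tb(K)-tb(\tilde K)|=|(m+n)-(\tilde m+\tilde n)|$ and $|rot(K)-rot(\tilde K)|=|(\tilde m+n)-(m+\tilde n)|$, each bounded by the sum. The only cosmetic difference is that you route through Lemma~\ref{CostL5} and then split into signs, whereas the paper starts directly from the signed decomposition (via the remark following Lemma~\ref{CostL6}).
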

\begin{proof}
    Let $\Cost(K,\Tilde{K})=p+n+\Tilde{p}+\Tilde{n}$, where $(S^+)^{p}(S^-)^{n}(K)=(S^+)^{\Tilde{p}}(S^-)^{\Tilde{n}}(\Tilde{K})$. Then $tb(K)-p-n=tb(\Tilde{K})-\Tilde{p}-\Tilde{n}$ and $rot(K)+p-n=rot(\Tilde{K})+\Tilde{p}-\Tilde{n}$. Since $p,n,\Tilde{p},$ and $ \Tilde{n}$ are non-negative integers, $p+n+\Tilde{p}+\Tilde{n}\geq |(p+n)-(\Tilde{p}+\Tilde{n})|=\abs{tb(K)-tb(\Tilde{K})}$ and $p+n+\Tilde{p}+\Tilde{n}\geq |(\Tilde{p}+n)-(p+\Tilde{n})|=\abs{rot(K)-rot(\Tilde{K})}$. Hence, $$\Cost(K,\Tilde{K})\geq \max\{\abs{tb(K)-tb(\Tilde{K})},\abs{rot(K)-rot(\Tilde{K})}\}.$$

\end{proof}
\begin{remark}\label{NewRemark2}
 Note that if $K=(S^+)^m(S^-)^n(L)$ then $\Cost(K,L)=m+n$, where $m$ and $n$ are positive integers. For the justification, we clearly have $
   \Cost(K,L)\leq m+n$ and $tb(K)=tb(L)-m-n$. Now, by Lemma \ref{CostL8}, we have $\Cost(K,L)\geq |tb(K)-tb(L)|= m+n$.
\end{remark}
\begin{Lemma}\label{CostL9}
    Let $\mathcal{K}$ be a topological knot type which is Legendrian simple. Let $K$ and $\Tilde{K}$ be Legendrian representatives of $\mathcal{K}$. Then 
$$\Cost(K,\Tilde{K})=\max\{\abs{tb(K)-tb(\Tilde{K})},\abs{rot(K)-rot(\Tilde{K})}\}.$$
\end{Lemma}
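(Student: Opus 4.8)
The plan is to prove the two inequalities separately. The bound
$\Cost(K,\Tilde{K})\geq \max\{\abs{tb(K)-tb(\Tilde{K})},\abs{rot(K)-rot(\Tilde{K})}\}$
is exactly Lemma \ref{CostL8} and holds for every topological knot type, so it remains to establish the reverse inequality, and this is where Legendrian simplicity enters. By the symmetry of $\Cost$ (Remark \ref{re:Cost1}) and of the right-hand side, I may assume $tb(K)\geq tb(\Tilde{K})$; write $a=tb(K)-tb(\Tilde{K})\geq 0$ and $b=rot(K)-rot(\Tilde{K})$.

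The strategy for the upper bound is to stabilize $K$ a total of $k_1$ times and $\Tilde{K}$ a total of $k_2$ times, with the signs of the stabilizations chosen so that the two stabilized knots have the same pair $(tb,rot)$. Since stabilization preserves the topological knot type $\mathcal{K}$ and $\mathcal{K}$ is Legendrian simple, the two stabilized knots are then Legendrian isotopic, so by Lemma \ref{CostL6} (equivalently, the remark following it) we get $\Cost(K,\Tilde{K})\leq k_1+k_2$. Recording that a stabilization drops $tb$ by $1$ and changes $rot$ by $\pm 1$, if $p$ positive and $n$ negative stabilizations are applied to $K$ and $\Tilde{p}$ positive and $\Tilde{n}$ negative ones to $\Tilde{K}$, then the two results agree in $(tb,rot)$ exactly when $(p+n)-(\Tilde{p}+\Tilde{n})=a$ and $(p-n)-(\Tilde{p}-\Tilde{n})=b$. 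Thus the problem reduces to the elementary lattice optimization: minimize $p+n+\Tilde{p}+\Tilde{n}$ over non-negative integers subject to these two linear constraints.

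To solve this I would first observe that $a+b$ is even: by Fuchs--Tabachnikov (equivalently Lemma \ref{CostL5}) some stabilizations make $K$ and $\Tilde{K}$ Legendrian isotopic, hence of equal $tb+rot$, and each stabilization changes $tb+rot$ by $0$ or $-2$, so $tb(K)+rot(K)\equiv tb(\Tilde{K})+rot(\Tilde{K})\pmod 2$. Setting $k_1=p+n$ and $k_2=\Tilde{p}+\Tilde{n}$, the first constraint forces $k_1=k_2+a$, while the rotation numbers reachable from $K$ and from $\Tilde{K}$ are the arithmetic progressions $\{rot(K)-k_1,\dots,rot(K)+k_1\}$ and $\{rot(\Tilde{K})-k_2,\dots,rot(\Tilde{K})+k_2\}$ of step $2$; because $a+b$ is even these lie in the same parity class, so they share a value iff $\abs{b}\leq k_1+k_2=2k_2+a$. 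Minimizing $k_1+k_2=2k_2+a$ over $k_2\geq\max\{0,(\abs{b}-a)/2\}$ yields total $\max\{a,\abs{b}\}$: take $k_2=0,\,k_1=a$ when $\abs{b}\leq a$, and $k_2=(\abs{b}-a)/2,\,k_1=(\abs{b}+a)/2$ when $\abs{b}>a$. From the chosen common rotation value one then reads off admissible non-negative $p,n,\Tilde{p},\Tilde{n}$, and concludes $\Cost(K,\Tilde{K})\leq k_1+k_2=\max\{\abs{tb(K)-tb(\Tilde{K})},\abs{rot(K)-rot(\Tilde{K})}\}$; together with Lemma \ref{CostL8} this gives equality.

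The only genuine ingredient beyond bookkeeping is the appeal to Legendrian simplicity, which upgrades ``same $(tb,rot)$ after stabilization'' to ``Legendrian isotopic after stabilization''; everything else is the integer optimization above, whose one delicate point is the parity matching of the two rotation-number progressions, handled by the fact that $tb+rot \bmod 2$ is a topological knot-type invariant. I expect the main effort in the write-up to be presenting this optimization cleanly rather than any conceptual difficulty, since the subtle direction of the ``mountain range'' picture --- realizability of a given $(tb,rot)$ --- is avoided entirely by only ever stabilizing representatives we already possess.
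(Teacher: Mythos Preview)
Your argument is correct and follows essentially the same route as the paper: the lower bound is Lemma~\ref{CostL8}, and the upper bound is obtained by explicitly stabilizing $K$ and $\Tilde{K}$ so as to match $(tb,rot)$ (using the parity of $tb+rot$) and then invoking Legendrian simplicity via Lemma~\ref{CostL6}. The only difference is packaging---you solve a single integer optimization where the paper breaks into cases according to whether $\abs{tb(K)-tb(\Tilde{K})}$ or $\abs{rot(K)-rot(\Tilde{K})}$ is larger---and there is a harmless sign slip in your second constraint (it should read $(p-n)-(\Tilde{p}-\Tilde{n})=-b$), which is immaterial since only $\abs{b}$ enters the minimization.
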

    
\begin{proof}
 Let $t$ and $\Tilde{t}$ be Thurston-Bennequin numbers of $K$ and $\Tilde{K}$, respectively. Let $r,\Tilde{r}$ be the rotation number invariants of $K$ and $\Tilde{K}$ respectively. Let $p=\abs{t-\Tilde{t}}$ and $m=\abs{r-\Tilde{r}}$.
Using Lemma \ref{CostL8}, we get $\Cost(K,\Tilde{K})\geq \max\{p,m\}$. Now we need to prove the reverse inequality. Note that
\begin{equation}\label{Costeq3}
    p+m=0 \text{ mod }2,
\end{equation} 
since $tb(K)+rot(K)=1 \text{ mod }2$ for every Legendrian knot $K$ (Proposition 3.5.23, \cite{GCT}). Without loss of generality, we can assume that $t\geq\Tilde{t}$. Now we will give the proof when $r\leq \Tilde{r}.$ The case $r\geq \Tilde{r}$ is similar to this. Now we consider the following cases.

\noindent \textbf{Case 1:} $p=0$.

From Equation \ref{Costeq3}, we have $m=2n$ for some integer $n.$

\noindent \textbf{Subcase 1:} $m=0$.

Since $\mathcal{K}$ is a Legendrian simple knot type, $K$ is Legendrian isotopic to $\Tilde{K}$ and $\Cost(K,\Tilde{K})=0.$

\noindent \textbf{Subcase 2:} $m>0$.
 Let $K'$ and $\Tilde{K}'$ be the Legendrian knots obtained by $n$ positive stabilizations on $K$ and $n$ negative stabilizations on $\Tilde{K}$, respectively. Then $tb(K')=tb(\Tilde{K}')$ and $rot(K')=rot(\Tilde{K}')$. Therefore, $K'$ is Legendrian isotopic to $\Tilde{K}'$. Hence, $\Cost(K,\Tilde{K})\leq 2n=m$.

\noindent \textbf{Case 2:} $p\ne 0$.

\noindent \textbf{Subcase 1:} $m=0$.

Using Equation \ref{Costeq3}, we have $p=2d$ for some positive integer $d$. Let $\hat{K}$ be the Legendrian knot obtained by stabilizing $K$ positively and negatively $d$ times each. Then $tb(\hat{K})=\Tilde{t}, rot(\hat{K})=\Tilde{r}$. Thus $\hat{K}$ is Legendrian isotopic to $\Tilde{K}$ since $\mathcal{K}$ is a Legendrian simple knot type. Hence, $\Cost(K,\Tilde{K})\leq 2d=p$.

\noindent \textbf{Subcase 2:} $p=m$.

 Let $K_2$ be the Legendrian knot obtained after positively stabilizing $K$, $m$ times. Then $rot(K_2)=r+m=\Tilde{r}$ and $tb(K_2)=t-m=\Tilde{t}$. Hence, $K_2$ is Legendrian isotopic to $\Tilde{K}$ and $\Cost(K,\Tilde{K})\leq m$.

\noindent \textbf{Subcase 3:} $p>m>0$. 

 Let $K_3$ be the Legendrian knot obtained by $m+\frac{p-m}{2}$ positive stabilizations and $\frac{p-m}{2}$ negative stabilizations on $K$. Then $tb(K_3)=t-p=\Tilde{t}$ and $rot(K_3)=r+m=\Tilde{r}$. Thus $K_3$ is Legendrian isotopic to $\Tilde{K}$. Hence, $\Cost(K,\Tilde{K})\leq p$.

\noindent\textbf{Subcase 4:} $m>p>0$.

Let $K_4$ be the Legendrian knot obtained by $ p+\frac{m-p}{2}$ positive stabilizations on $K$. Let $K_5$ be the Legendrian knot obtained by $\frac{m-p}{2}$ negative stabilizations on $\Tilde{K}$. Then $tb(K_4)=t-p-\frac{m-p}{2}=\Tilde{t}-\frac{m-p}{2}=tb(K_5)$ and $rot(K_4)=r+p+\frac{m-p}{2}=\Tilde{r}-\frac{m-p}{2}=rot(K_5)$. Thus $K_4$ and $K_5$ are Legendrian isotopic. Hence, $\Cost(K,\Tilde{K})\leq p+\frac{m-p}{2}+\frac{m-p}{2}=m.$ 

\end{proof}

\section{Cost Associated With some special families of knots}

In this section, we consider a family of oriented Legendrian knots $E_{k,l}$ whose front projection has $k\ge 1$ crossings on the left and $l\geq 1$ on the right as shown in Figure \ref{fig:ekl}. 

\begin{figure*}[!htbp]
    \centering
        \includegraphics[height=1.2in]{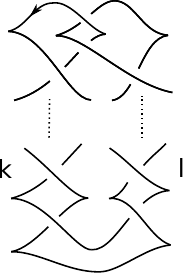}
    \caption{Front projection of $E_{k,l}$.}
    \label{fig:ekl}
\end{figure*}

\begin{Lemma}\label{CostL10}
    Let $k,l\:\geq1$ and $n\geq4$ be positive integers with $k+l=n$. Then $\Cost(E_{k,l},E_{k+1,l-1})=2$ if $k\neq l-1$ and $0$ otherwise.
\end{Lemma}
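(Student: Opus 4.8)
The plan is to identify $E_{k,l}$ as a Legendrian representative of a twist knot with explicit classical invariants, and then apply Lemma \ref{CostL8} for the lower bound together with an explicit stabilization argument for the upper bound. First I would read off from the front projection in Figure \ref{fig:ekl} the Thurston--Bennequin number and the rotation number of $E_{k,l}$ as functions of $k$ and $l$: counting writhe and cusps gives $tb$, and counting up/down cusps gives $rot$. The key feature I expect is that $tb(E_{k,l})$ is the \emph{same} for all pairs $(k,l)$ with $k+l=n$ fixed (the total crossing number and cusp count do not change when we slide a crossing from one side to the other), while $rot(E_{k,l})$ depends on the difference, roughly $rot(E_{k,l}) = c\cdot(k-l)$ or $\tfrac{k-l}{2}$-type expression, so that $rot(E_{k+1,l-1}) - rot(E_{k,l})$ equals $\pm 1$ when $k \neq l-1$ and equals $0$ when $k = l-1$ (the symmetric case, where the two knots are actually Legendrian isotopic by a flype-type move visible in the front).

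For the case $k = l-1$: here $tb$ and $rot$ agree for $E_{k,l}$ and $E_{k+1,l-1} = E_{l,k}$, and in fact these two front projections should be related by a Legendrian isotopy (a symmetry of the diagram exchanging the two groups of crossings). I would exhibit this isotopy directly on the fronts, or cite that within this subfamily the relevant twist knot is Legendrian simple at the maximal $tb$ in question, to conclude $\Cost = 0$ via Theorem \ref{CostL4}.

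For the case $k \neq l-1$: by Lemma \ref{CostL8}, $\Cost(E_{k,l}, E_{k+1,l-1}) \geq \max\{|tb - tb|, |rot - rot|\} = \max\{0, 1\} = 1$, and by Lemma \ref{CostL7}, since the $tb$'s are equal, the Cost is even, hence $\Cost \geq 2$. For the matching upper bound I would show $S^+(E_{k,l})$ is Legendrian isotopic to $S^-(E_{k+1,l-1})$ (or the reverse, depending on the sign of $k-l$): after one positive stabilization of one and one negative stabilization of the other, both $tb$ and $rot$ agree, and I would produce an explicit sequence of Legendrian Reidemeister moves on the fronts realizing the isotopy — geometrically, a stabilized crossing can be slid from the left block to the right block. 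Then by Lemma \ref{CostL6} (with $p = n = 1$ on one side, handled symmetrically), $\Cost(E_{k,l}, E_{k+1,l-1}) \leq 2$, giving equality.

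The main obstacle I anticipate is the explicit Legendrian isotopy in the upper-bound step: verifying that after the single pair of stabilizations the two fronts are genuinely Legendrian isotopic requires a careful, somewhat lengthy front-manipulation argument (tracking cusps through LR1, LR2, LR3 moves as one crossing migrates across the diagram), and getting the signs of the stabilizations correct relative to the orientation and to the sign of $k - l$. The lower bound, by contrast, is essentially immediate from Lemmas \ref{CostL7} and \ref{CostL8} once the invariants are computed. A secondary subtlety is making sure the invariant computation is done for the \emph{oriented} knot with the orientation fixed in Figure \ref{fig:ekl}, since $rot$ is orientation-dependent.
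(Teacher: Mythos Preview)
Your upper-bound strategy and the use of Lemma~\ref{CostL7} to force evenness are both fine and match the paper; the paper likewise exhibits an explicit sequence of LR moves taking a once-stabilized $E_{k,l}$ to a once-stabilized $E_{k+1,l-1}$, giving $\Cost \le 2$.

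The genuine gap is in your lower bound. The knots $E_{k,l}$ for fixed $k+l=n$ all have the \emph{same} classical invariants: the same $tb$ \emph{and} the same $rot$. (You can already see an internal inconsistency in your sketch: if $tb$ agrees then, since $tb+rot$ is odd for every Legendrian knot, the rotation numbers must have the same parity, so they cannot differ by~$1$.) These $E_{k,l}$ are precisely the standard examples showing that twist knots are \emph{not} Legendrian simple, so Lemma~\ref{CostL8} yields only $\Cost \ge 0$ here and contributes nothing. Consequently, your suggested alternative for the $k=l-1$ case --- appealing to simplicity at maximal $tb$ --- also fails.

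What is actually needed for $\Cost>0$ when $k\neq l-1$ is a non-classical distinguishing invariant. The paper simply imports this: it cites \cite{NEV} (Etnyre--Ng--V\'ertesi), where it is proved using ruling/Chekanov--Eliashberg techniques that $E_{k,l}$ and $E_{k+1,l-1}$ are Legendrian isotopic if and only if $k=l-1$. That result supplies both the $k=l-1$ isotopy and the $k\neq l-1$ non-isotopy; together with evenness (Lemma~\ref{CostL7}) and the explicit upper bound $\Cost\le 2$, the lemma follows. Your proposal would work if you replace the appeal to Lemma~\ref{CostL8} by this citation.
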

     
\begin{proof}
It is proved in \cite{NEV} that $E_{k,l}$ and $E_{k+1,l-1}$ are Legendrian isotopic if and only if $k=l-1$. Then $\Cost(E_{k,l},E_{k+1,l-1})=0 $, if $k=l-1$ and nonzero, otherwise. Below we have a sequence of LR-moves on one-time stabilized $E_{k,l}$ and $E_{k+1,l-1}$. Hence, $\Cost(E_{k,l},E_{k+1,l-1})=2$.

    \begin{figure}[!htbp]
    \centering
    \includegraphics[scale=0.6]{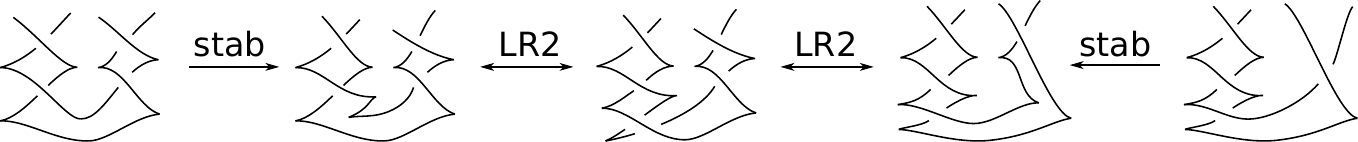}
    \caption{Sequence of LR-moves taking a stabilized $E_{k,l}$ to a stabilized $E_{k+1,l-1}$.}
    \label{fig:ekl1}
\end{figure}
\end{proof}
\begin{remark}{ 
    Proposition 3.6 in \cite{NEV} implies that the Cost between any two maximal $tb$ representatives of the twist knot with $n$-half twists can not exceed $4$. Using the proof from \cite{NEV} one can compute the exact Cost between any two Legendrian representatives of a given twist knot.} 
\end{remark}
\begin{example}
\label{2-b}
   A knot of the form shown in Figure \ref{fig:bridge} is called a $2$-bridge knot and is denoted by $[a_1,a_2,\cdots, a_{2n+1}]$. Here the box with label $a_i$ contains $|a_i|$ positive half twists if $a_i\geq1$ and $|a_i|$ negative half twists if $a_i\leq 1$. Let $L[2n,k:l,2q+1]$ be the Legendrian realization of the knot $[2n,-(k+l),2q+1],\: n,q,k,l\geq 1$ as shown in Figure \ref{fig:bridge1}.
   \begin{figure}[!htbp]
        \centering
        \includegraphics[scale=0.7]{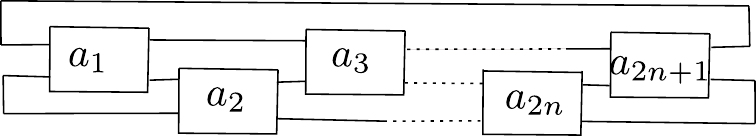}
        \caption{2-bridge knot $[a_1,a_2,\cdots,a_{2n+1}]$}
        \label{fig:bridge}
    \end{figure}
    It can be shown that $rot(L[2n,k:l,2q+1])=0$ and $tb(L[2n,k:l,2q+1])=2q+2n-1$ if $k+l$ is even and is $2q-2n-1$ if $k+l$ is odd. The Legendrian realisation $L[2n,k:l,2q+1]$ maximizes the $tb$ since 
    it admits ungraded rulings \cite{DRD}. Thus, $\text{Cost}(L[2n,k:l,2q+1], L[2n,k+1:l-1,2q+1])\leq 2$ using the technique shown in Figure 
    \ref{fig:ekl1}. Note that $L[2n,k:k,2q+1]$ admits more graded rulings than $L[2n,k+p:k-p,2q+1]$ where $p \geq 1$. Therefore, $L[2n,k:k,2q+1]$ is not 
    Legendrian isotopic to $L[2n,k+p:k-p,2q+1]$ for all $1\leq p \leq k-1$ as depicted in Figure \ref{fig:ruling} for the case when $n=1,q=1$. Thus $
    \text{Cost}(L[2n,k:k,2q+1],L[2n,k+1:k-1,2q+1])=2$.
    
    \begin{figure}[!htbp]
        \centering
        \includegraphics[scale=0.5]{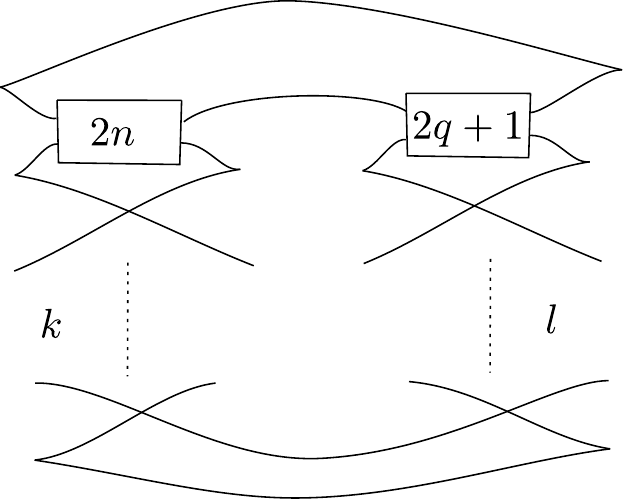}
        \caption{Legendrian knot $L[2n,k:l,2q+1]$}
        \label{fig:bridge1}
    \end{figure}
    \begin{figure}
        \centering
        \includegraphics[scale=0.45]{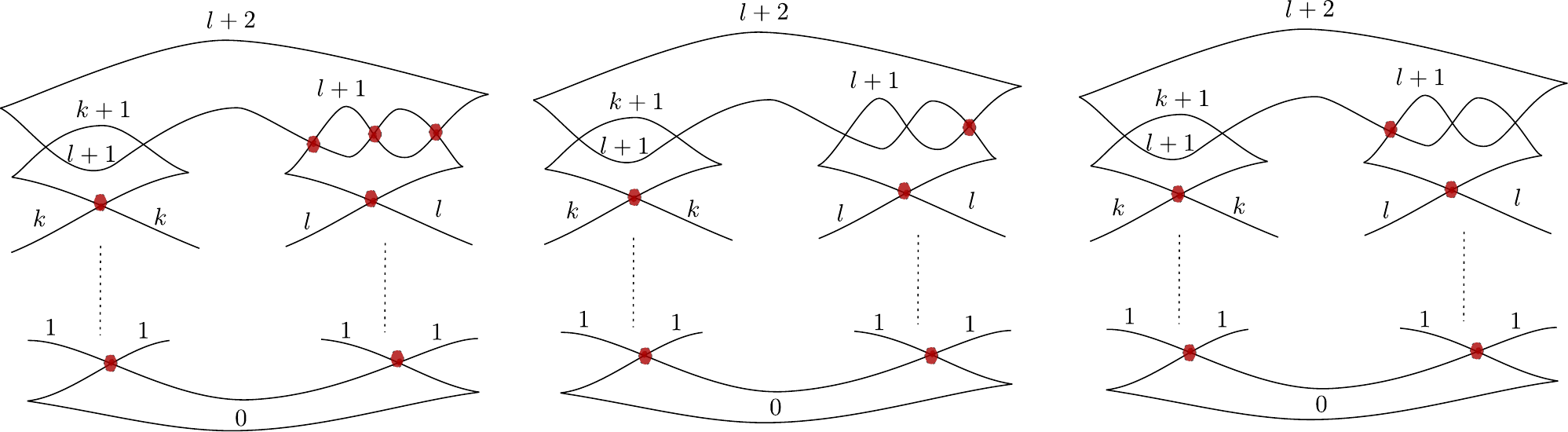}
        \includegraphics[scale=0.45]{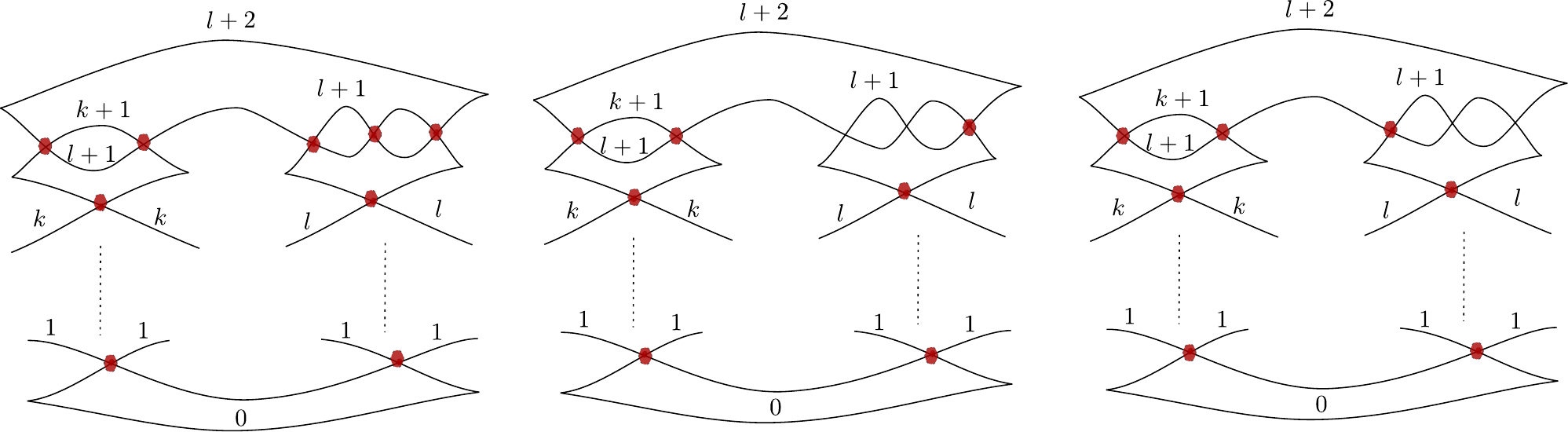}
        \caption{Above diagrams shows all possible $\rho$-graded rulings of $L[2,k:l,3]$, where the red dots indicate switches. The rulings shown in top row are $\rho$-graded for any $\rho$ since the switches happen at crossings with Maslov degree $0$. The rulings shown in the bottom row are $\rho$-graded whenever $\rho|k-l$.}
        \label{fig:ruling}
    \end{figure}
\end{example}
\begin{remark}
    The similar construction can be generalized to get maximal $tb$ representatives of $[a_1,\cdots,a_{2m+1}]$ whenever $\sum_{i=0}^ma_{2i+1}=\text{odd}$. 
\end{remark}
\begin{example}
    Legendrian realizations of positive cables of torus knots were classified in \cite{ELT}. For simplicity, we start by looking at $(r,s)$ cables of $T_{2,3}$. For $\frac{s}{r}<0,$ the maximum Thurston-Bennequin number is $rs$, and rotation numbers realized by these Legendrian knots are $$\{\pm (r+s(n+k))| k=(1+n),(1+n)-2,\cdots, -(1+n))\},$$ where $n$ is an integer that satisfies $-n-1 <\frac{r}{s}<-n$. Classification result shows that these knots are Legendrian simple. Notice, that the number of maximum Thurston-Bennequin Legendrian representatives increases as $n$ increases. This means that the difference between the extreme rotation numbers increases which increases the Cost between the Legendrian knots with extreme rotation numbers. Notice that $\frac{r}{s}$ can be made arbitrarily large. Hence given any number we can produce two Legendrian knots with the Cost between those two knots greater than the given number with the same Thurston-Bennequin number. A similar analysis can be done for cables of any positive torus knot. 
    \end{example}

\section{Cost Under the Connected Sum}
In \cite{EH}, the examples of Legendrian non-simple knot types are produced by taking the connected sum of negative torus knots. Lemma \ref{CostL11}, \ref{CostL12}, \ref{CostL14}, \ref{CostL15} and Proposition \ref{CostL16}, \ref{CostL17} are the results inspired by the techniques used in \cite{EH}.

\begin{Lemma}\label{CostL11}

    Let $K'$ and $K''$ be prime topological knot types with the property that every Legendrian representative of $K'$ destabilizes to a unique Legendrian knot $\Bar{K'}$ and every Legendrian representative of $K''$ destabilizes to a unique Legendrian knot $\Bar{K''}$. Then the knot type represented by $K'\#K''$ is Legendrian simple and each Legendrian representative of it destabilizes to $\Bar{K'}\#\Bar{K''}$.
\end{Lemma}

\begin{proof}
    Let $K_1,K_2$ be Legendrian knots in $\mathcal{L}(K'\#K'')$ with the same $tb$ and $rot$. Then there exist $K_1', K_2'\in \mathcal{L}(K')$ and $K_1'', K_2'' \in \mathcal{L}(K'')$ such that $K_1=K_1'\#K_1''$ and $K_2=K_2'\#K_2''$. There exist positive integers $p_1,p_2,q_1,q_2,n_1,n_2,m_1$ and $m_2$ such that $K_i'=(S^+)^{p_i}(S^-)^{n_i}(\bar{K'})$ and $K_i''=(S^+)^{q_i}(S^-)^{m_i}(\bar{K''})$ for $i=1,2.$ Then 
 \begin{align*}
        K_1&=(S^+)^{p_1}(S^-)^{n_1}(\bar{K'})\#(S^+)^{q_1}(S^-)^{m_1}(\bar{K''})\\
        &=\Bar{K'}\#(S^+)^{p_1+q_1}(S^-)^{n_1+m_1}(\Bar{K''}) \quad \text{ and }\\
        K_2&=(S^+)^{p_2}(S^-)^{n_2}(\bar{K'})\#(S^+)^{q_2}(S^-)^{m_2}(\bar{K''})\\
        &=\Bar{K'}\#(S^+)^{p_2+q_2}(S^-)^{n_2+m_2}(\Bar{K''}).&&
    \end{align*}

We get $p_1+q_1+n_1+m_1=p_2+q_2+m_2+n_2$ and $p_1+q_1-n_1-m_1=p_2+q_2-n_2-m_2$ since $tb(K_1)=tb(K_2)$ and $rot(K_1)=rot(K_2)$. Solving these two equations we get $p_1+q_1=p_2+q_2$ and $n_1+m_1=n_2+m_2$. Thus, $(S^+)^{p_1+q_1}(S^-)^{n_1+m_1}(\Bar{K''})=(S^+)^{p_2+q_2}(S^-)^{n_2+m_2}(\Bar{K''}).$ Hence, $K_1$ is Legendrian isotopic to $K_2$.

     For the proof of the second part, let $K\in \mathcal{L}(K'\#K'')$ be any Legendrian knot. Then $K=K_1'\#K_1''$ for some $K_1'\in \mathcal{L}(K')$ and $K_1''\in \mathcal{L}(K'')$. We know that $K_1'=(S^+)^{p}(S^-)^{n}(\Bar{K'})$ and $K_1''=(S^+)^{q}(S^-)^{m}(\Bar{K''})$ for some $p,n,q$ and $m$. Thus
     \begin{align*}
         K=(S^+)^{p}(S^-)^{n}(\Bar{K_1'})\#(S^+)^{q}(S^-)^{m}(\Bar{K_1''})=(S^+)^{p+q}(S^-)^{m+n}(\Bar{K'}\#\Bar{K''}).
     \end{align*}
\end{proof}

\begin{remark}\label{Re2}
 Lemma \ref{CostL11} can be extended to a connected sum of $n$ prime knots for any
natural number $n\geq 2$. That is, if $K_i, i = 1,\cdots, n$ are prime topological knot types with each having a unique non-destabilizable Legendrian representative $\Bar{K}_i$, then $K_1\#\cdots\#K_n$ is Legendrian simple and every Legendrian
representative of $K_1\#\cdots\#K_n$ is obtained by stabilizing $\Bar{K}_1\#\cdots \#\Bar{K}_n$. If the unique non-destabilizable hypothesis does not hold then the Legendrian simplicity is not guaranteed. For example, look at the connected sum of negative torus knots in \cite{EH}.
\end{remark}
\begin{corollary}
     The connected sum of two positive torus knots is Legendrian simple and every Legendrian representative destabilizes to a unique Legendrian knot with maximal Thurston-Bennequin number.
\end{corollary}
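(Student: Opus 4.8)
The plan is to derive this corollary directly from Lemma \ref{CostL11}. To apply that lemma with $\mathcal{K}'=T(p,q)$ and $\mathcal{K}''=T(p',q')$ two positive torus knots, I need two inputs: that positive torus knots are prime topological knot types, and that every Legendrian representative of a positive torus knot destabilizes to a \emph{unique} Legendrian knot. The first is classical (torus knots are prime). The second is exactly the content of the Etnyre--Honda classification of Legendrian torus knots (see \cite{E}): for a positive torus knot the ``mountain range'' of Legendrian representatives has a single peak, namely a unique representative $\bar{K'}$ realizing the maximal Thurston--Bennequin number (and having rotation number $0$), and every Legendrian representative of $T(p,q)$ is obtained from $\bar{K'}$ by a sequence of positive and negative stabilizations. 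In particular every representative destabilizes, and the destabilization limit is the unique knot $\bar{K'}$; likewise for $T(p',q')$ with limit $\bar{K''}$.

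Granting these, Lemma \ref{CostL11} immediately yields that $T(p,q)\#T(p',q')$ is a Legendrian simple knot type and that every Legendrian representative destabilizes to $\bar{K'}\#\bar{K''}$. It then remains only to identify $\bar{K'}\#\bar{K''}$ as the representative of maximal Thurston--Bennequin number. By the uniqueness of the Legendrian prime decomposition recorded earlier (\cite{EH}), every Legendrian representative $K$ of $T(p,q)\#T(p',q')$ has the form $K_1'\#K_1''$ with $K_1'\in\mathcal{L}(T(p,q))$ and $K_1''\in\mathcal{L}(T(p',q'))$; using the formula $tb(K_1'\#K_1'')=tb(K_1')+tb(K_1'')+1$ we get $tb(K)\le tb(\bar{K'})+tb(\bar{K''})+1=tb(\bar{K'}\#\bar{K''})$, with equality forcing $tb(K_1')=tb(\bar{K'})$ and $tb(K_1'')=tb(\bar{K''})$, hence $K_1'=\bar{K'}$ and $K_1''=\bar{K''}$ by the uniqueness of the maximal-$tb$ representative of each torus knot factor. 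Thus $\bar{K'}\#\bar{K''}$ is the unique representative with maximal Thurston--Bennequin number and every Legendrian representative destabilizes to it.

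The only ingredient beyond Lemma \ref{CostL11} and routine bookkeeping with $tb$ under connected sum is the external classification theorem, so the ``hard part'' is not a new argument but invoking the Etnyre--Honda result in precisely the form used here: positive torus knots are Legendrian simple with a single-peak mountain range, so that the unique-destabilization hypothesis of Lemma \ref{CostL11} is met. Once that is in place, and once primality of torus knots is noted, the corollary follows with no further subtlety.
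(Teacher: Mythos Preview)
Your proposal is correct and takes essentially the same approach as the paper: the corollary is stated without proof immediately after Lemma \ref{CostL11}, so it is meant to follow directly from that lemma together with the Etnyre--Honda classification of Legendrian positive torus knots (single-peak mountain range) and primality of torus knots, exactly as you outline. Your extra verification that $\bar{K'}\#\bar{K''}$ realizes the maximal $tb$ is fine, though one could also note that any knot to which \emph{every} representative destabilizes must itself have maximal $tb$, since destabilization strictly increases $tb$.
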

\begin{proof}
    Every positive torus knot is Legendrian simple and it has a unique maximal tb representative \cite{EH}. By Lemma \ref{CostL11}, the above statement follows. 
\end{proof}
The following result concerns the unique prime decomposition of a given Legendrian knot. In \cite{EH} a similar result for Legendrian knots having maximal tb is given. We generalize their result for the non-destabilizable Legendrian knots.
\begin{Lemma}\label{CostL12}
   If $K$ is a non-destabilizable Legendrian knot then it admits a unique prime decomposition up to possible permutations.
\end{Lemma}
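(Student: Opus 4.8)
The plan is to deduce uniqueness of the prime decomposition for a nondestabilizable Legendrian knot $K$ from the known uniqueness result for Legendrian prime decompositions stated in the earlier remark (\cite{EH}), which says that a Legendrian prime decomposition is unique up to moving stabilizations between components and permuting components. First I would invoke that remark: write $K \iso K_1 \# K_2 \# \cdots \# K_r$ and $K \iso L_1 \# L_2 \# \cdots \# L_s$ as two Legendrian prime decompositions. The cited result already forces $r = s$ and, after a permutation, forces $L_i$ to be obtained from $K_i$ by transferring stabilizations among the factors; equivalently, each $L_i$ is Legendrian isotopic to $(S^+)^{a_i}(S^-)^{b_i}(K_i')$ where $K_i'$ and $L_i'$ have a common "core" and $\sum a_i = \sum b_i = 0$ in the sense that the total number of positive (resp. negative) stabilizations is conserved across the whole connect sum.

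Next I would use the hypothesis that $K$ is nondestabilizable to kill all the transferred stabilizations. The key observation is that a stabilization of any prime factor $K_i$ induces a stabilization of the whole connect sum: if $K_i = S^{\pm}(K_i^0)$ then $K_1 \# \cdots \# K_i \# \cdots \# K_r$ is Legendrian isotopic to $S^{\pm}(K_1 \# \cdots \# K_i^0 \# \cdots \# K_r)$, because a stabilization is supported in a small disc on a single strand and that disc can be slid to sit wherever we like in the front projection of the connect sum (using that stabilizations are independent of their location up to Legendrian isotopy, as noted in the Preliminaries, together with the front-projection picture of the Legendrian connect sum in Figure \ref{fig:LCS}). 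Hence if any $K_i$ were destabilizable, $K$ itself would be destabilizable, contradicting the hypothesis; so every prime factor $K_i$ in any prime decomposition of $K$ is itself nondestabilizable.

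Now I would run the comparison again: matching $K_i$ with $L_i$ after permutation, the earlier uniqueness result tells us $L_i$ differs from $K_i$ only by shifting stabilizations in from or out to the other factors. But since every $K_j$ and every $L_j$ is nondestabilizable, no factor can absorb a stabilization (that would make it destabilizable) and no factor can emit one (it has none to emit, being nondestabilizable and hence not of the form $S^{\pm}(\text{something})$). Therefore the shifting is trivial: $L_i$ is Legendrian isotopic to $K_i$ for each $i$, which is exactly uniqueness up to permutation. I would also remark that we should record why a nondestabilizable Legendrian knot has prime factors that are all nondestabilizable as a clean standalone step, since it is the crux.

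The main obstacle I anticipate is the "sliding a stabilization across the connect-sum sphere" step — making precise that a stabilization performed on one prime summand of a Legendrian connect sum is Legendrian isotopic to a stabilization of the whole knot performed anywhere, and conversely that the connect sum inherits destabilizability from its factors. This is intuitively clear from the front-projection model (Figure \ref{fig:LCS}) and the location-independence of stabilizations, but it needs the observation that the connect-sum disc/arc $\alpha_i$ can be chosen disjoint from the disc supporting a given stabilization, so the two operations commute; once that is granted, the rest is a short formal argument built on the cited uniqueness statement of Etnyre–Honda \cite{EH}.
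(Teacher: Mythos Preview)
Your approach is correct and is essentially the paper's own argument: both invoke the Etnyre--Honda uniqueness result (prime decomposition unique up to permutation and moving stabilizations between factors) and then use the observation that a stabilization on any factor slides to a stabilization of the whole connect sum, so nondestabilizability of $K$ forces every factor to be nondestabilizable, leaving no stabilizations to move. Your write-up is actually more explicit than the paper's about why the stabilization-transfer ambiguity vanishes once all factors are nondestabilizable, but the strategy is identical.
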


\begin{proof}
    Let $K=K_1\#K_2\cdots \#K_n$ be a prime decomposition unique up to stabilization and possible permutations. If none of the $K_i$s destabilizes, then the prime decomposition is unique up to permutation using Remark \ref{newRemark}. If possible, let $K_i$ be obtained by stabilizing $\Bar{K_i}$, that is $K_i=(S^+)^p(S^-)^m(\Bar{K}_i)$ for some positive integer $p$ and $m$. Then 
    \begin{align*}
        K=&K_1\#\cdots (S^+)^p(S^-)^m(\Bar{K}_i) \cdots\# K_n\\
        =&(S^+)^p(S^-)^m(K_1\#\cdots \Bar{K}_i\cdots \#K_n).
    \end{align*}
    Thus we arrive at a contradiction.
\end{proof}

\begin{Lemma}\label{CostL14}
    Let $K=K_1\#\cdots\#K_n$ be a prime decomposition. If every Legendrian knot in $\mathcal{L}(K)$ destabilizes to a unique Legendrian knot $\Bar{K}$ then for each $i \in \{1,2,\cdots,n\}$, there exists a unique Legendrian representative $\Bar{K_i}$ of $K_i$ such that every Legendrian knot in $\mathcal{L}(K_i)$ destabilizes to $\Bar{K_i}$.
\end{Lemma}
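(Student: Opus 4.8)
The plan is to run a contrapositive-style argument built on the uniqueness of prime decomposition established in Lemma \ref{CostL12} together with the computation of $tb$ and $rot$ under connected sum. Fix $i$ and suppose, towards a contradiction, that $\mathcal{L}(K_i)$ contains two Legendrian representatives $A$ and $B$ that destabilize to non-Legendrian-isotopic nondestabilizable knots $\bar A \neq \bar B$. First I would replace $A$ and $B$ by their destabilizations, so without loss of generality $A$ and $B$ are themselves nondestabilizable, of the same topological type $K_i$, and not Legendrian isotopic. The idea is then to build two Legendrian representatives of $K = K_1\#\cdots\#K_n$ out of $A$ and $B$ and show they fail to destabilize to a common knot, contradicting the hypothesis on $\mathcal{L}(K)$.

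Concretely, pick any Legendrian representatives $L_j \in \mathcal{L}(K_j)$ for $j \neq i$, and form
\begin{align*}
  M_A &:= L_1 \# \cdots \# L_{i-1} \# A \# L_{i+1} \# \cdots \# L_n, \\
  M_B &:= L_1 \# \cdots \# L_{i-1} \# B \# L_{i+1} \# \cdots \# L_n.
\end{align*}
Both lie in $\mathcal{L}(K)$, so by hypothesis each destabilizes to the unique knot $\bar K$. By Lemma \ref{CostL12}, a nondestabilizable Legendrian knot has a unique prime decomposition up to permutation; write $\bar K = \bar K_1 \# \cdots \# \bar K_n$ for the prime decomposition with factors matching the topological types $K_1,\dots,K_n$. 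Now $M_A$ destabilizes to $\bar K$, which by the uniqueness of prime decomposition up to moving stabilizations (Remark on Legendrian prime decomposition) forces $A$ to destabilize to $\bar K_i$; likewise $B$ destabilizes to $\bar K_i$. Since $A$ and $B$ are already nondestabilizable, this gives $A = \bar K_i = B$ up to Legendrian isotopy, contradicting $\bar A \neq \bar B$. Hence every Legendrian knot in $\mathcal{L}(K_i)$ destabilizes to a single knot, which we name $\bar K_i$, and uniqueness of that knot is exactly the statement.

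The main obstacle I anticipate is making the step "$M_A$ destabilizes to $\bar K$ implies $A$ destabilizes to $\bar K_i$" fully rigorous: one must track how stabilizations distribute among the prime summands. The clean way is to invoke the Eliashberg--Husaken-type uniqueness (the Remark after Lemma \ref{CostL12}): the prime decomposition of any Legendrian knot is unique up to permutation and up to moving stabilizations between factors. Applying this to $M_A = M_1' \# \cdots \# M_n'$ with $M_i' = A$, and comparing with the prime decomposition of $\bar K$, we learn that $A$ and $\bar K_i$ differ only by stabilizations; since both are nondestabilizable they are Legendrian isotopic. A secondary point to be careful about is that the destabilization of a nondestabilizable knot is the knot itself, so "destabilizes to a unique knot" for $\mathcal{L}(K_i)$ is equivalent to "all nondestabilizable representatives of $K_i$ are Legendrian isotopic," which is precisely what the contradiction rules out. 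With these two observations in place the argument is short; the bulk of the work is citing the right uniqueness statement and handling the bookkeeping of which summand absorbs the stabilizations.
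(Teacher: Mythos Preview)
Your argument is correct and follows essentially the same route as the paper: form connected sums to land in $\mathcal{L}(K)$, destabilize to $\bar K$, and use the Etnyre--Honda uniqueness of Legendrian prime decomposition (up to moving stabilizations) to force the $i$-th factor to be $\bar K_i$. The only difference is organizational: the paper first establishes the unique maximal-$tb$ representative $\bar K_j$ of each $K_j$ and then uses those specific $\bar K_j$ (rather than arbitrary $L_j$) as the auxiliary summands, which makes the comparison of the two prime decompositions of $M_A$ slightly more direct since the $j\neq i$ factors match on the nose; your one-pass version with arbitrary $L_j$ works just as well once you invoke the full ``unique up to moving stabilizations'' statement.
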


\begin{proof}
    Let $\Bar{K}_i$ be a maximal $tb$ Legendrian knot in $\mathcal{L}(K_i) \ \forall \  1\leq i\leq n.$ If possible, let $\hat{K_i}\in \mathcal{L}
    (K_i)$ be another maximal $tb$ Legendrian knot. Then using Remark \ref{newRemark3}, $\Bar{K_1}\#\cdots \Bar{K_i}\#\cdots \Bar{K_n}$ and $
    \Bar{K_1}\#\cdots\hat{K_i}\#\cdots \Bar{K_n}$ are Legendrian knots in $\mathcal{L}(K)$, with maximal $tb$. Since $\Bar{K}$ is the unique maximal tb knot, $\Bar{K_1}\#\cdots \Bar{K_i}\#\cdots \Bar{K_n}$ and $
    \Bar{K_1}\#\cdots\hat{K_i}\#\cdots \Bar{K_n}$ are Legendrian isotopic to $\Bar{K}$. Thus, $\Bar{K_1}\#\cdots\hat{K_i}\#\cdots \Bar{K_n}
    =\Bar{K_1}\#\cdots \Bar{K_i}\#\cdots \Bar{K_n}$ which implies $\Bar{K}_i=\hat{K}_i$. Therefore there exists a unique maximal $tb$ representative of $K_i$ $\forall \ 1\leq i\leq n$.

    If possible, let $K_i'\in \mathcal{L}(K_i)$ be a non-maximal $tb$ and non-destabilizable Legendrian knot for some $i\in \{1,2,\cdots,n\}$. Now consider $\Bar{K}_1\#\cdots K_i'\cdots \#\Bar{K}_n\in \mathcal{L}(K)$. Since every Legendrian knot in $\mathcal{L}(K)$ destabilizes to $\Bar{K}$, for some $p,m\in \mathbb{Z}^+$ we get
    \begin{align*}
        \Bar{K}_1\#\cdots K_i'\cdots \#\Bar{K}_n&=(S^+)^{p}(S^-)^{m}(\Bar{K})\\
        &=\Bar{K_1}\#\cdots(S^+)^{p}(S^-)^m(\Bar{K}_i)\cdots\#\Bar{K_n}.
    \end{align*}
    Thus $K_i'=(S^+)^{p}(S^-)^{m}(\Bar{K}_i)$, which is a contradiction to the fact that $K_i'$ is non-destabilizable Legendrian knot. Hence, every Legendrian representative of $K_i$ destabilzes to $\Bar{K}_i \: \forall 1\leq i\leq n.$
\end{proof}

\begin{Lemma}\label{CostL15}
Let $K'$ and $K''$ be topological knot types with the property that every Legendrian representative of $K'$ destabilizes to a unique Legendrian knot $\Bar{K'}$ and every Legendrian representative of $K''$ destabilizes to a unique Legendrian knot $\Bar{K''}$. Then the knot type represented by $K'\#K''$ is Legendrian simple and every Legendrian representative of $K'\#K''$ destabilizes to $\Bar{K'}\#\Bar{K''}$.
\end{Lemma}
\begin{proof}
    The proof follows from Lemmas \ref{CostL11}, \ref{CostL14} and Remark \ref{Re2}.
\end{proof}

\begin{proposition}\label{CostL16}
Let $K'$ and $K''$ be prime knots of different topological knot types with maximal $tb$ Legendrian representatives $\Bar{K_1'},\Bar{K_2'}\in \mathcal{L}(K')$  and $\Bar{K_1''},\Bar{K_2''}\in \mathcal{L}(K'')$ such that $rot(\Bar{K_1'})+rot(\Bar{K_1''})=rot(\Bar{K_2'})+rot(\Bar{K_2''})$. Further assume that either $ \Bar{K_1'} $ is not Legendrian isotopic to $\Bar{K_2'}$ or $ \Bar{K_1''} $ is not Legendrian isotopic to $\Bar{K_2''}$. Then the knot type represented by $K'\#K''$ is Legendrian non-simple.
\end{proposition}
   
\begin{proof}
   
The Legendrian knots $\Bar{K_1'}\#\Bar{K_1''}$ and $\Bar{K_2'}\#\Bar{K_2''}$ are maximal $tb$ knots, thus their prime decomposition is unique up to possible permutations. Therefore, $\Bar{K_1'}\#\Bar{K_1''}$ is not Legendrian isotopic to $\Bar{K_2'}\#\Bar{K_2''}$. Hence, the knot type represented by $K'\#K''$ is Legendrian non-simple, since $tb(\Bar{K_1'}\#\Bar{K_1''})=tb(\Bar{K_2'}\#\Bar{K_2''})$ and $rot(\Bar{K_1'}\#\Bar{K_1''})=rot(\Bar{K_2'}\#\Bar{K_2''})$.
\end{proof}

\begin{proposition}\label{CostL17}
    Let $K$ be a topological prime knot type. If there exist maximal $tb$ Legendrian representatives $K_0,K_1,K_2$ and $K_3$ of $K$ with $rot(K_i)=rot(K_0)+i$ for $i=1,2,3$, then the knot type represented by $K\#K$ is a Legendrian non-simple.
\end{proposition}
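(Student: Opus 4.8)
The plan is to run the mechanism of Proposition~\ref{CostL16} with both connect-sum factors equal to the topological type $\mathcal{K}$ of $K$. The role played there by ``$K'$ and $K''$ are of different topological types'' is taken here by the primality of $K$: it forces any prime decomposition of $\mathcal{K}\#\mathcal{K}$ to be a pair of representatives of $\mathcal{K}$, with no coarser or finer regrouping available. The two competing decompositions will come from pairing the four given representatives as $(K_0,K_3)$ against $(K_1,K_2)$ --- a pairing rigged so that the two pairs have equal rotation-number sums while being genuinely different as sets.

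Concretely, write $r=rot(K_0)$, so that by hypothesis $rot(K_i)=r+i$ and $tb(K_i)=\overline{tb}(\mathcal{K})$ (the maximal Thurston--Bennequin number of $\mathcal{K}$) for $i=0,1,2,3$. First I would form the Legendrian representatives
\[
A \;=\; K_0\#K_3, \qquad B \;=\; K_1\#K_2
\]
of $\mathcal{K}\#\mathcal{K}$. By additivity of the classical invariants under the Legendrian connected sum (the Remark after Lemma~\ref{CostL11}) one gets $tb(A)=2\,\overline{tb}(\mathcal{K})+1=tb(B)$ and $rot(A)=2r+3=rot(B)$. Since every Legendrian representative of $\mathcal{K}\#\mathcal{K}$ is a Legendrian connected sum of a representative of $\mathcal{K}$ with one of $\mathcal{K}$, the number $2\,\overline{tb}(\mathcal{K})+1$ is the maximal $tb$ of $\mathcal{K}\#\mathcal{K}$; hence $A$ and $B$ both attain it and are in particular nondestabilizable.

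The crux is to show $A$ and $B$ are not Legendrian isotopic. Assume they are. Then $K_0\#K_3$ and $K_1\#K_2$ are two prime decompositions of one nondestabilizable Legendrian knot --- prime because each $K_i$ represents the prime knot $\mathcal{K}$ --- so by Lemma~\ref{CostL12} they coincide up to a permutation of the factors. Hence $\{[K_0],[K_3]\}=\{[K_1],[K_2]\}$ as multisets of Legendrian isotopy classes, and applying the Legendrian-isotopy invariant $rot$ yields $\{r,\,r+3\}=\{r+1,\,r+2\}$, which is absurd since $r,r+1,r+2,r+3$ are four distinct integers. Therefore $A\ne B$ in $\mathcal{L}(\mathcal{K}\#\mathcal{K})$ while $tb(A)=tb(B)$ and $rot(A)=rot(B)$, so $\mathcal{K}\#\mathcal{K}$ is Legendrian non-simple.

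The step I expect to be the main obstacle is justifying that $A$ and $B$ are nondestabilizable, which amounts to knowing $\overline{tb}(\mathcal{K}\#\mathcal{K})=2\,\overline{tb}(\mathcal{K})+1$ (so that Lemma~\ref{CostL12} applies to them verbatim). This is exactly the same input --- additivity of the maximal Thurston--Bennequin number together with uniqueness of Legendrian prime decompositions --- that underlies Proposition~\ref{CostL16}; here it is used with the single topological type $\mathcal{K}$ and the two rigged pairings $(K_0,K_3)$ and $(K_1,K_2)$.
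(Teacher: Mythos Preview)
Your proof is correct and follows essentially the same route as the paper's: form $K_0\#K_3$ and $K_1\#K_2$, note they share the same (maximal) $tb$ and the same $rot$, and then use uniqueness of the Legendrian prime decomposition for maximal-$tb$ (equivalently, nondestabilizable) knots to see they cannot be Legendrian isotopic. Your write-up is more detailed---in particular you spell out the rotation-number contradiction $\{r,r+3\}\neq\{r+1,r+2\}$ and flag the use of $\overline{tb}(\mathcal{K}\#\mathcal{K})=2\,\overline{tb}(\mathcal{K})+1$---but the argument is the same.
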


\begin{proof}
    Consider Legendrian knots $K_0\#K_3$ and $K_1\#K_2$. Then $tb(K_0\#K_3)=tb(K_1\#K_2)$
 and $r(K_0\#K_3)=r(K_1\#K_2)$. Since $K_0\#K_3$ and $K_1\#K_2$ are maximal $tb$ knots, their prime decomposition is unique up to possible permutations. Hence, $K_0\#K_3$ is not Legendrian isotopic to $K_1\#K_2$.
 \end{proof}

\begin{Lemma}\label{CostL18}
    Let $K_1,L_1, K_2$ and $L_2$ be Legendrian knots of same topological knot type, then $$\Cost(K_1\#K_2,L_1\#L_2)\leq \min\{\Cost(K_1,L_1)+\Cost(K_2,L_2),\Cost(K_1,L_2)+\Cost(K_2,L_1)\}.$$
\end{Lemma}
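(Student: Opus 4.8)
The plan is to exploit the fact that the connected sum operation is compatible with stabilizations, together with the characterization of the Cost function from the remark following Lemma \ref{CostL6}, namely that $\Cost(F,\Tilde{F})=\min\{m+n : S^m(F)\text{ is Legendrian isotopic to }S^n(\Tilde{F})\}$. So the first step is to prove the inequality $\Cost(K_1\#K_2,L_1\#L_2)\leq \Cost(K_1,L_1)+\Cost(K_2,L_2)$; the other term in the minimum follows identically after swapping the roles of $L_1$ and $L_2$ (using that $K_1\#K_2=K_2\#K_1$ up to Legendrian isotopy, so the left-hand side is unchanged).

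For the first inequality, I would invoke Lemma \ref{CostL5} to write $\Cost(K_1,L_1)=p_1+q_1$ with $(S^+)^{a_1}(S^-)^{b_1}(K_1)$ Legendrian isotopic to $(S^+)^{c_1}(S^-)^{d_1}(L_1)$ where $a_1+b_1=p_1$ and $c_1+d_1=q_1$; similarly $\Cost(K_2,L_2)=p_2+q_2$ with $(S^+)^{a_2}(S^-)^{b_2}(K_2)$ Legendrian isotopic to $(S^+)^{c_2}(S^-)^{d_2}(L_2)$. The key algebraic identity is that stabilizations on a connected sum can be distributed onto either summand: for Legendrian knots $A,B$ one has $(S^\pm)(A\#B)$ Legendrian isotopic to $(S^\pm A)\#B$ and to $A\#(S^\pm B)$. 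Applying this, $(S^+)^{a_1}(S^-)^{b_1}(S^+)^{a_2}(S^-)^{b_2}(K_1\#K_2)$ is Legendrian isotopic to $\big((S^+)^{a_1}(S^-)^{b_1}K_1\big)\#\big((S^+)^{a_2}(S^-)^{b_2}K_2\big)$, which in turn is Legendrian isotopic to $\big((S^+)^{c_1}(S^-)^{d_1}L_1\big)\#\big((S^+)^{c_2}(S^-)^{d_2}L_2\big)$, which reassembles to $(S^+)^{c_1+c_2}(S^-)^{d_1+d_2}(L_1\#L_2)$. Hence some stabilization of $K_1\#K_2$ by a total of $p_1+p_2$ cusps is Legendrian isotopic to a stabilization of $L_1\#L_2$ by a total of $q_1+q_2$ cusps. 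By Lemma \ref{CostL6} (or the remark after it) this gives $\Cost(K_1\#K_2,L_1\#L_2)\leq (p_1+p_2)+(q_1+q_2)=\Cost(K_1,L_1)+\Cost(K_2,L_2)$.

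To get the second bound I repeat the argument with $L_1$ and $L_2$ interchanged: a suitable stabilization of $K_1\#K_2$ becomes Legendrian isotopic to a stabilization of $L_2\#L_1$, which equals $L_1\#L_2$ up to Legendrian isotopy, so $\Cost(K_1\#K_2,L_1\#L_2)\leq \Cost(K_1,L_2)+\Cost(K_2,L_1)$. Taking the minimum of the two bounds finishes the proof.

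The main obstacle, and the only point that needs genuine care, is justifying the distributivity of stabilizations over the connected sum: that $(S^\pm A)\#B$, $A\#(S^\pm B)$, and $S^\pm(A\#B)$ are all Legendrian isotopic. This follows because a stabilization can be performed in an arbitrarily small disc along any arc of the front (stabilization is independent of its location up to Legendrian isotopy, as recalled in the Preliminaries), and in particular along the arc $\alpha_i$ used to form the connected sum — pushing a stabilization across the connect-sum region shows it can be slid from one summand to the other. This is precisely the mechanism behind the uniqueness of the Legendrian prime decomposition up to moving stabilizations between components (the remark citing \cite{EH}). Once this is in hand, the rest of the proof is purely bookkeeping with the additivity of cusp counts under the distribution operations.
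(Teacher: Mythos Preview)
Your proposal is correct and follows essentially the same approach as the paper: realize each $\Cost(K_i,L_j)$ by an explicit set of stabilizations, distribute those stabilizations across the connected sum using $(S^\pm A)\#B = A\#(S^\pm B) = S^\pm(A\#B)$, and then invoke Lemma~\ref{CostL6} to bound $\Cost(K_1\#K_2,L_1\#L_2)$; the second bound comes from the same computation with $L_1,L_2$ swapped and commutativity of the Legendrian connected sum. Your write-up is in fact slightly more careful than the paper's in explicitly justifying the stabilization-distributivity step.
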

\begin{proof}
 Let $\Cost(K_i,L_j)=p_{ij}+n_{ij}+q_{ij}+m_{ij}$ with $$(S^+)^{p_{ij}}(S^-)^{n_{ij}}(K_i)=(S^+)^{q_{ij}}(S^-)^{m_{ij}}(L_j), \: i,j=1,2.$$ Then, 

    \begin{align}
        \nonumber (S^+)^{p_{11}+p_{22}}(S^-)^{n_{11}+n_{22}}(K_1\#K_2)&=(S^+)^{p_{11}}(S^-)^{n_{11}}(K_1)\#(S^+)^{p_{22}}(S^-)^{n_{22}}(K_2)\\
       \nonumber &=(S^+)^{q_{11}+q_{22}}(S^-)^{m_{11}+m_{22}}(L_1\#L_2).
        \end{align}
        Thus,
        \begin{align}
        \nonumber \Cost(K_1\#K_2,L_1\#L_2)&\leq \sum_{i=1}^2p_{ii}+n_{ii}+q_{ii}+m_{ii}\\
        &=\Cost(K_1,L_1)+\Cost(K_2,L_2).\label{eq:Costeq1}
    \end{align}
    Now,
     \begin{align}
       \nonumber (S^+)^{p_{12}+p_{21}}(S^-)^{n_{12}+n_{21}}(K_1\#K_2)&=(S^+)^{p_{12}}(S^-)^{n_{12}}(K_1)\#(S^+)^{p_{21}}(S^-)^{n_{21}}(K_2)\\
       \nonumber &=(S^+)^{q_{12}}(S^-)^{m_{12}}(L_2)\#(S^+)^{q_{21}}(S^-)^{m_{21}}(L_1)\\
       \nonumber &=(S^+)^{q_{12}+q_{21}}(S^-)^{m_{21}+m_{21}}(L_2\#L_1).
       \end{align}
       Thus,
       \begin{align}
        \Cost(K_1\#K_2,L_1\#L_2)&\leq \Cost(K_1,L_2)+\Cost(K_2,L_1).\label{eq:Costeq2}
    \end{align}

   Using equations \ref{eq:Costeq1} and \ref{eq:Costeq2}, we have 
   \begin{align*}
     \Cost(K_1\#K_2,L_1\#L_2)  \leq \min\{\Cost(K_1,L_1)+\Cost(K_2,L_2),\Cost(K_1,L_2)+\Cost(K_2,L_1)\}.
   \end{align*}
   
    \end{proof}

\begin{Lemma}\label{CostL19}
    Let $K_1,L_1$ and $K_2,L_2$ be Legendrian knots of two different topological knot types, then $\Cost(K_1\#K_2,L_1\#L_2)\leq \Cost(K_1,L_1)+\Cost(K_2,L_2)$.
\end{Lemma}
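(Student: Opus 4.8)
The plan is to adapt the first half of the proof of Lemma \ref{CostL18}. First observe that the left-hand side is well defined: since $K_1,L_1$ share a topological knot type $\mathcal{K}'$ and $K_2,L_2$ share a topological knot type $\mathcal{K}''$, both $K_1\#K_2$ and $L_1\#L_2$ represent $\mathcal{K}'\#\mathcal{K}''$. The point of the hypothesis that $\mathcal{K}'\ne\mathcal{K}''$ is precisely that, unlike in Lemma \ref{CostL18}, the cross terms $\Cost(K_1,L_2)$ and $\Cost(K_2,L_1)$ are not even defined, so the minimum appearing there is replaced by the single bound we must establish.

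Next I would realize the two costs on the right by stabilizations, exactly as at the start of the proof of Lemma \ref{CostL18}: there are nonnegative integers $p_1,n_1,q_1,m_1$ with $\Cost(K_1,L_1)=p_1+n_1+q_1+m_1$ and $(S^+)^{p_1}(S^-)^{n_1}(K_1)$ Legendrian isotopic to $(S^+)^{q_1}(S^-)^{m_1}(L_1)$, and nonnegative integers $p_2,n_2,q_2,m_2$ with $\Cost(K_2,L_2)=p_2+n_2+q_2+m_2$ and $(S^+)^{p_2}(S^-)^{n_2}(K_2)$ Legendrian isotopic to $(S^+)^{q_2}(S^-)^{m_2}(L_2)$.

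Then I would push the stabilizations across the connect sum. Since a stabilization is independent of its location and may be slid freely from one connect-sum factor to the other (the uniqueness-of-prime-decomposition remark in Section 3), one obtains the chain of Legendrian isotopies
\begin{align*}
(S^+)^{p_1+p_2}(S^-)^{n_1+n_2}(K_1\#K_2)&=(S^+)^{p_1}(S^-)^{n_1}(K_1)\#(S^+)^{p_2}(S^-)^{n_2}(K_2)\\
&=(S^+)^{q_1}(S^-)^{m_1}(L_1)\#(S^+)^{q_2}(S^-)^{m_2}(L_2)\\
&=(S^+)^{q_1+q_2}(S^-)^{m_1+m_2}(L_1\#L_2).
\end{align*}
Hence $(S^+)^{p_1+p_2}(S^-)^{n_1+n_2}(K_1\#K_2)$ is Legendrian isotopic to $(S^+)^{q_1+q_2}(S^-)^{m_1+m_2}(L_1\#L_2)$, and Lemma \ref{CostL6} gives
\[
\Cost(K_1\#K_2,L_1\#L_2)\le (p_1+p_2)+(n_1+n_2)+(q_1+q_2)+(m_1+m_2)=\Cost(K_1,L_1)+\Cost(K_2,L_2).
\]

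There is essentially no obstacle: the argument is a direct restriction of the proof of Lemma \ref{CostL18} to the ``diagonal'' pairing $K_1\leftrightarrow L_1$, $K_2\leftrightarrow L_2$, and since we never need to pair $K_1$ with $L_2$ we do not even use commutativity of the connect sum. The only point requiring care is the legitimacy of moving stabilizations across the connect sum, which is exactly the content of the cited remark; beyond that one should only check that the stabilized factors connect-sum with coherent orientations, as in the proof of Lemma \ref{CostL18}.
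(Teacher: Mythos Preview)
Your proposal is correct and is exactly what the paper does: it states that the proof follows from the first part of the proof of Lemma~\ref{CostL18}, which is precisely the ``diagonal'' stabilization argument you have written out. Your only addition is making the use of Lemma~\ref{CostL6} explicit, which is fine.
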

\begin{proof}
   The proof follows from the first part of the proof of Lemma \ref{CostL18}.
\end{proof}

\begin{Lemma}\label{CostL20}
    Let $K_1,L_1$ and $K_2,L_2$ be Legendrian knots of two different topological knot types such that $K_1, K_2$ destabilizes to $ L_1,L_2$, respectively. Then $\Cost(K_1\#K_2,L_1\#L_2)=\Cost(K_1,L_1)+\Cost(K_2,L_2)$.  
\end{Lemma}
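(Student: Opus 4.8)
The plan is to prove the two inequalities separately. The bound $\Cost(K_1\#K_2,L_1\#L_2)\le\Cost(K_1,L_1)+\Cost(K_2,L_2)$ is already supplied by Lemma \ref{CostL19}, since by hypothesis $K_1,L_1$ and $K_2,L_2$ lie in two \emph{different} topological knot types. So the whole content of the lemma is the reverse inequality, and for that I expect to need only the classical invariant $tb$ together with Lemma \ref{CostL8}; in particular no uniqueness-of-prime-decomposition input (unlike the earlier lemmas in this section) is required.

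First I would unwind the destabilization hypothesis. Since $K_i$ destabilizes to $L_i$, and stabilizations of either sign and at any location commute up to Legendrian isotopy, there are non-negative integers $p_i,n_i$ with $K_i=(S^+)^{p_i}(S^-)^{n_i}(L_i)$ for $i=1,2$. I then claim $\Cost(K_i,L_i)=p_i+n_i$: the inequality $\le$ is immediate from the characterization of $\Cost$ by stabilizations in the remark following Lemma \ref{CostL6} (take $m=0$ and the stabilization count $p_i+n_i$ on $L_i$), while $\ge$ follows from Lemma \ref{CostL8}, because each stabilization changes $tb$ by $-1$, so $\abs{tb(K_i)-tb(L_i)}=p_i+n_i$.

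Next I would pass to the connected sum. Using the compatibility of stabilization with $\#$ already exploited in the proof of Lemma \ref{CostL18}, namely $(S^+)^{a}(S^-)^{b}(A)\#(S^+)^{c}(S^-)^{d}(B)=(S^+)^{a+c}(S^-)^{b+d}(A\#B)$, I obtain $K_1\#K_2=(S^+)^{p_1+p_2}(S^-)^{n_1+n_2}(L_1\#L_2)$. Since $tb$ is additive under connected sum (every knot in $\R^3$ is nullhomologous), this gives $\abs{tb(K_1\#K_2)-tb(L_1\#L_2)}=(p_1+n_1)+(p_2+n_2)$. Applying Lemma \ref{CostL8} to the pair $K_1\#K_2,\ L_1\#L_2$ then yields $\Cost(K_1\#K_2,L_1\#L_2)\ge(p_1+n_1)+(p_2+n_2)=\Cost(K_1,L_1)+\Cost(K_2,L_2)$, and combining with Lemma \ref{CostL19} finishes the proof.

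There is no serious obstacle here: the argument is short and purely bookkeeping with $tb$. The only points needing care are verifying that the hypothesis genuinely delivers the normal form $K_i=(S^+)^{p_i}(S^-)^{n_i}(L_i)$ (which rests on commuting stabilizations up to Legendrian isotopy) and that Lemma \ref{CostL19} applies verbatim because the two topological knot types are distinct.
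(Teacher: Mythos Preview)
Your proof is correct and follows essentially the same route as the paper: both arguments write $K_i=(S^+)^{p_i}(S^-)^{n_i}(L_i)$ and then observe $K_1\#K_2=(S^+)^{p_1+p_2}(S^-)^{n_1+n_2}(L_1\#L_2)$, from which the equality of costs is read off. The paper's proof is terser and leaves implicit the step ``if $A$ is a stabilization of $B$ by $k$ stabilizations then $\Cost(A,B)=k$,'' which you justify explicitly via the $tb$ lower bound of Lemma~\ref{CostL8}; your added detail there is a genuine improvement in rigor rather than a different strategy. One minor remark: you do not actually need additivity of $tb$ under connected sum to compute $\abs{tb(K_1\#K_2)-tb(L_1\#L_2)}$, since this follows directly from the displayed stabilization identity and the fact that each stabilization lowers $tb$ by one.
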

\begin{proof}
As $K_1, K_2$ destabilizes to $ L_1,L_2$, respectively, there exist non-negative integers $p,n,q$ and $m$ such that the following holds
 \begin{itemize}
     \item $K_1=(S^+)^p(S^-)^n(L_1),$ and $ K_2=(S^+)^{q}(S^-)^m(L_2)$
     \item $\Cost (K_1, L_1) = p+n$ and $\Cost(K_2, L_2)=q+m$ by Remark \ref{NewRemark2}
 \end{itemize} 
 Then the lemma follows from Remark \ref{NewRemark2} and the observation 
 \begin{align*}
     K_1\#K_2&=(S^+)^p(S^-)^n(L_1)\#(S^+)^{q}(S^-)^m(L_2)\\
     &=(S^+)^{p+q}(S^-)^{n+m}(L_1\#L_2).
 \end{align*}

\end{proof}

\begin{Lemma}\label{CostL21}
Let $K_1,L_1$ and $K_2,L_2$ be Legendrian knots of two different topological knot types such that $K_1=(S^+)^p(S^-)^n(L_1)$ and $ L_2=(S^+)^q(S^-)^{m}(K_2)$. Then $$\Cost(K_1\#K_2,L_1\#L_2)=|p-q|+|n-m|.$$
\end{Lemma}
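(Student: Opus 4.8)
The plan is to prove the two inequalities $\Cost(K_1\#K_2,L_1\#L_2)\le |p-q|+|n-m|$ and $\Cost(K_1\#K_2,L_1\#L_2)\ge |p-q|+|n-m|$ separately. For the upper bound I would compute both stabilized knots until they agree. Starting from the hypotheses $K_1=(S^+)^p(S^-)^n(L_1)$ and $L_2=(S^+)^q(S^-)^m(K_2)$, and using the standard fact (from the remark following the connected sum discussion) that stabilizations pass freely between summands of a connected sum, I would write
\begin{align*}
 K_1\#K_2&=(S^+)^p(S^-)^n(L_1)\#K_2=(S^+)^p(S^-)^n(L_1\#K_2),\\
 L_1\#L_2&=L_1\#(S^+)^q(S^-)^m(K_2)=(S^+)^q(S^-)^m(L_1\#K_2).
\end{align*}
Now apply $(S^+)^{\max(q-p,0)+\,?}$-type corrections: more precisely, let $a=\max(p,q)-p$, $a'=\max(p,q)-q$, $b=\max(n,m)-n$, $b'=\max(n,m)-m$, so that stabilizing $K_1\#K_2$ by $a$ positive and $b$ negative stabilizations, and $L_1\#L_2$ by $a'$ positive and $b'$ negative stabilizations, yields on both sides the knot $(S^+)^{\max(p,q)}(S^-)^{\max(n,m)}(L_1\#K_2)$, hence two Legendrian isotopic knots. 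By Lemma \ref{CostL6} this gives $\Cost(K_1\#K_2,L_1\#L_2)\le a+b+a'+b'=|p-q|+|n-m|$.

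For the lower bound I would use the classical-invariant obstruction of Lemma \ref{CostL8} together with the additivity of $tb$ and $rot$ under connected sum (the remark giving $tb(K\#K')=tb(K)+tb(K')+1$ and $rot(K\#K')=rot(K)+rot(K')$). Since $K_1=(S^+)^p(S^-)^n(L_1)$ we have $tb(K_1)-tb(L_1)=-(p+n)$ and $rot(K_1)-rot(L_1)=p-n$; since $L_2=(S^+)^q(S^-)^m(K_2)$ we have $tb(K_2)-tb(L_2)=q+m$ and $rot(K_2)-rot(L_2)=-(q-m)$. Adding, $tb(K_1\#K_2)-tb(L_1\#L_2)=(q+m)-(p+n)$ and $rot(K_1\#K_2)-rot(L_1\#L_2)=(p-n)-(q-m)=(p-q)-(n-m)$. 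Lemma \ref{CostL8} then gives
$$\Cost(K_1\#K_2,L_1\#L_2)\ge\max\{\,|(p+n)-(q+m)|,\ |(p-q)-(n-m)|\,\}.$$
Unfortunately this maximum is in general strictly smaller than $|p-q|+|n-m|$, so the classical-invariant bound alone does not close the gap; this is the step I expect to be the main obstacle.

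To get the sharp lower bound I would argue more carefully using the unique prime decomposition up to moving stabilizations (the remark on Legendrian prime decomposition, and Lemma \ref{CostL12}). Suppose $\Cost(K_1\#K_2,L_1\#L_2)=r+s+r'+s'$ realized by $(S^+)^{r}(S^-)^{s}(K_1\#K_2)$ Legendrian isotopic to $(S^+)^{r'}(S^-)^{s'}(L_1\#L_2)$. Writing everything in terms of $L_1$ and $K_2$ via the hypotheses, the left side is $(S^+)^{r+p}(S^-)^{s+n}(L_1\#K_2)$ and the right side is $(S^+)^{r'+q}(S^-)^{s'+m}(L_1\#K_2)$. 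Because $K_1$ and $K_2$ (hence $L_1$ and $K_2$) are of different prime topological types, uniqueness of the Legendrian prime decomposition up to transferring stabilizations forces the two fully-destabilized prime pieces to coincide and the total positive and total negative stabilization counts to match on the two sides: $r+p=r'+q$ and $s+n=s'+m$, i.e. $r-r'=q-p$ and $s-s'=m-n$. Since $r,r',s,s'\ge 0$, we get $r+r'\ge|q-p|$ and $s+s'\ge|m-n|$, hence $r+s+r'+s'\ge|p-q|+|n-m|$. Combining with the upper bound yields the desired equality. The delicate point to nail down is exactly the invocation of uniqueness of the prime decomposition — one must check that distinctness of the topological types of the summands guarantees the decomposition of $L_1\#K_2$ has precisely two prime factors, so that the bookkeeping of stabilization multiplicities on each side is forced; this is where I would spend the most care.
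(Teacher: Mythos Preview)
Your upper bound is correct and in fact cleaner than the paper's case-by-case computation: writing both $K_1\#K_2$ and $L_1\#L_2$ as stabilizations of the common knot $L_1\#K_2$ and then equalizing is exactly the right move.

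The problem is your assertion that the classical-invariant bound ``is in general strictly smaller than $|p-q|+|n-m|$.'' This is false. With $A=p-q$ and $B=n-m$ you computed
\[
\Cost(K_1\#K_2,L_1\#L_2)\ \ge\ \max\{\,|A+B|,\ |A-B|\,\},
\]
and for any real $A,B$ one has $\max\{|A+B|,|A-B|\}=|A|+|B|$ (just check the two cases $AB\ge 0$ and $AB\le 0$). So Lemma~\ref{CostL8} already gives the sharp lower bound $|p-q|+|n-m|$ and the proof is finished right there. This is precisely what the paper does: it splits into four cases according to the signs of $p-q$ and $n-m$, in each case exhibits the stabilizations making the two sides agree (your unified upper bound), and then reads off the matching lower bound from either the $tb$-difference (when $A,B$ have the same sign) or the $rot$-difference (when they have opposite signs).

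Your prime-decomposition detour is therefore unnecessary, and as written it is also not justified: the hypothesis is only that the two topological types are \emph{different}, not prime, so $L_1\#K_2$ need not have exactly two prime factors. The equations $r+p=r'+q$ and $s+n=s'+m$ that you extract from it are nevertheless correct, but they follow immediately from comparing $tb$ and $rot$ of the two equal knots $(S^+)^{r+p}(S^-)^{s+n}(L_1\#K_2)$ and $(S^+)^{r'+q}(S^-)^{s'+m}(L_1\#K_2)$---no decomposition theorem needed. In other words, your second argument collapses back to the classical-invariant bound you had already dismissed.
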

\begin{proof}
We will prove this Lemma in cases. 
    
    \noindent \textbf{Case I:} $p\geq q,n\geq m$.
    
  \noindent We prove $K_1\#K_2$ destabilizes to $L_1\#L_2$ as follows:
   \begin{align*}
       K_1\#K_2&=(S^+)^p(S^-)^n(L_1\#K_2)\\
       &=(S^+)^{p-q}(S^-)^{n-m}(L_1\#(S^+)^q(S^-)^{m}(K_2))\\
       &=(S^+)^{p-q}(S^-)^{n-m}(L_1\#L_2).&&
   \end{align*} 
   Using Remark \ref{NewRemark2}, we have
   \begin{align*}
       \Cost(K_1\#K_2,L_1\#L_2)&=p-q+n-m\\
       &=|\Cost(K_1,L_1)-\Cost(K_2,L_2)|.
   \end{align*}
   
 \noindent \textbf{Case II:} $p\leq q,n\leq m$.

   \noindent Using the similar technique as used in Case I, we prove that $K_1\#K_2$ destabilizes to $L_1\#L_2$.
   \begin{align*}
       L_1\#L_2&=(S^+)^q(S^-)^{m}(L_1\#K_2)\\
       &=(S^+)^{q-p}(S^-)^{m-n}((S^+)^p(S^-)^n(L_1)\#K_2)\\
       &=(S^+)^{q-p}(S^-)^{m-n}(K_1\#K_2).&&
   \end{align*} 
   Therefore, by Remark \ref{NewRemark2}
   \begin{align*}
       \Cost(K_1\#K_2,L_1\#L_2)&=q-p+m-n\\
       &=|\Cost(K_1,L_1)-\Cost(K_2,L_2)|.
   \end{align*}

   \noindent \textbf{Case III:} $p\geq q,n\leq m$.

\noindent Since $ rot(K_1)=rot(L_1)-n+p$, and $,rot(L_2)=rot(K_2)-m+q$, using Lemma \ref{CostL8}, we get
\begin{align*}
    \Cost(K_1\#K_2,L_1\#L_2)&\geq |rot(K_1\#K_2)-rot(L_1\#L_2)|\\
    &=p-q+m-n.
\end{align*}
The reverse inequality follows from the computations done below. 

   \begin{align*}
       (S^-)^{m-n}(K_1\#K_2)&=(S^-)^{m-n}((S^+)^p(S^-)^n(L_1)\#K_2)\\
       &=(S^+)^p(S^-)^{m}(L_1\#K_2)\\
       &=(S^+)^{p-q}(L_1\#(S^+)^q(S^-)^m(K_2))\\
       &=(S^+)^{p-q}(L_1\#L_2).
   \end{align*}

   \noindent \textbf{Case IV:} $p\leq q,m\leq n$.

      Using Lemma \ref{CostL8}, we have
\begin{align*}
    \Cost(K_1\#K_2,L_1\#L_2)&\geq |rot(K_1\#K_2)-rot(L_1\#L_2)|\\
    &=|p-q+m-n|=q-p+n-m.
\end{align*}

   The reverse inequality follows from the following computations:
\begin{align*}
       (S^+)^{q-p}(K_1\#K_2)&=(S^+)^{q-p}((S^+)^p(S^-)^n(L_1)\#K_2)\\
       &=(S^+)^q(S^-)^{n}(L_1\#K_2)\\
       &=(S^-)^{n-m}(L_1\#(S^+)^q(S^-)^m(K_2))\\
       &=(S^-)^{n-m}(L_1\#L_2).
   \end{align*}

   Hence, we conclude from the above cases that $$\Cost(K_1\#K_2,L_1\#L_2)=|p-q|+|n-m|.$$
\end{proof}

\begin{Lemma}\label{CostL22}
    Let $\mathcal{K}_1$ and $\mathcal{K}_2$ be different topological knot types which are Legendrian simple. Let $K_1,L_1\in \mathcal{K}_1$ and $K_2,L_2\in\mathcal{K}_2$ be Legendrian knots with maximal $tb$, then 
    
    \noindent (i) If $rot(K_i)\leq rot(L_i),i=1,2$, then $$\Cost(K_1\#K_2,L_1\#L_2)=\Cost(K_1,L_1)+\Cost(K_2,L_2).$$
       
\noindent (ii) If $rot(K_1)\leq rot(L_1)$ and $rot(K_2)\geq rot(L_2)$, then
        \begin{align*}
            |\Cost(K_1,L_1)-\Cost(K_2,L_2)|&\leq \Cost(K_1\#K_2,L_1\#L_2)\\
            &\leq \max\{Cost(K_1,L_1),Cost(K_2,L_2)\}.&&
        \end{align*}
    
\end{Lemma}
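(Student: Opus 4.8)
The plan is to reduce both parts to the earlier structural lemmas on connected sums by working with the canonical destabilization descriptions of the four knots. Since $\mathcal K_1$ and $\mathcal K_2$ are Legendrian simple, each $K_i$ and $L_i$ is determined by its $(tb, rot)$ pair, and because $K_i, L_i$ have maximal $tb$ in their knot types, any two Legendrian representatives of $\mathcal K_i$ destabilize to $K_i$ (equivalently $L_i$), so the hypothesis of Lemma \ref{CostL15} is met; in particular $K_1\#K_2$ and $L_1\#L_2$ both have maximal $tb$ in $\mathcal K_1\#\mathcal K_2$, and that knot type is Legendrian simple. Write $a_i := rot(L_i) - rot(K_i)$ for $i=1,2$; note $a_i \ge 0$ in case (i) and $a_1 \ge 0 \ge a_2$ in case (ii). By Lemma \ref{CostL9} applied inside $\mathcal K_i$ we have $\Cost(K_i, L_i) = |a_i|$ since $tb(K_i) = tb(L_i)$ forces the $tb$-difference to be zero; this identifies the right-hand sides of both claimed (in)equalities in terms of the $a_i$.

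For part (i): since $a_1, a_2 \ge 0$ and $tb(K_i)=tb(L_i)$, the knot $L_1$ is obtained from $K_1$ by $a_1/2$ positive and $a_1/2$ negative stabilizations up to Legendrian isotopy (here $a_i$ is even because $tb + rot$ is odd and $tb$'s agree), and likewise for $L_2$; equivalently $L_1\#L_2$ is obtained from $K_1\#K_2$ by $(a_1+a_2)/2$ positive and $(a_1+a_2)/2$ negative stabilizations. Now $K_1\#K_2$ and $L_1\#L_2$ both have maximal $tb$ and are Legendrian simple, and $rot(L_1\#L_2) - rot(K_1\#K_2) = a_1 + a_2$, so Lemma \ref{CostL9} gives $\Cost(K_1\#K_2, L_1\#L_2) = |a_1 + a_2| = a_1 + a_2 = |a_1| + |a_2| = \Cost(K_1,L_1) + \Cost(K_2,L_2)$, using $a_1, a_2 \ge 0$. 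This is the clean case and should go through directly.

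For part (ii): the lower bound $|\Cost(K_1,L_1) - \Cost(K_2,L_2)| = \big||a_1| - |a_2|\big| = |a_1 + a_2|$ (since $a_1 \ge 0 \ge a_2$) is immediate from Lemma \ref{CostL8}, because $rot(L_1\#L_2) - rot(K_1\#K_2) = a_1 + a_2$ and $tb(K_1\#K_2) = tb(L_1\#L_2)$. For the upper bound I would use Lemma \ref{CostL18}: $\Cost(K_1\#K_2, L_1\#L_2) \le \Cost(K_1,L_1) + \Cost(K_2,L_2)$ handles the case that this sum equals the max (it never does here unless one cost is zero), so the real content is the bound by $\max\{|a_1|,|a_2|\}$. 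Assume WLOG $|a_1| \ge |a_2|$, i.e.\ $a_1 \ge -a_2 \ge 0$. The idea is to destabilize $K_1\#K_2$ and $L_1\#L_2$ to a common knot using exactly $|a_1|$ total stabilizations. Concretely, $L_1$ is $K_1$ with $a_1/2$ positive and $a_1/2$ negative stabilizations, while $K_2$ is $L_2$ with $|a_2|/2$ positive and $|a_2|/2$ negative stabilizations; feed both descriptions into the connected sum (using that stabilizations can be slid across the connect-sum point, Lemma \ref{CostL11}-style manipulations) to exhibit a Legendrian knot $M$ and nonnegative integers $p,n,\tilde p,\tilde n$ with $p+n+\tilde p + \tilde n = |a_1|$ such that $(S^+)^p(S^-)^n(K_1\#K_2)$ and $(S^+)^{\tilde p}(S^-)^{\tilde n}(L_1\#L_2)$ are both Legendrian isotopic to $M$; Legendrian simplicity of $\mathcal K_1\#\mathcal K_2$ guarantees the isotopy once the $(tb,rot)$ invariants match, which is a bookkeeping check. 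Then Lemma \ref{CostL6} gives $\Cost(K_1\#K_2, L_1\#L_2) \le |a_1| = \max\{|a_1|,|a_2|\} = \max\{\Cost(K_1,L_1),\Cost(K_2,L_2)\}$.

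The main obstacle is the bookkeeping in part (ii): one must choose the stabilization signs on the two summands so that the \emph{total} stabilization count is $|a_1|$ rather than $|a_1| + |a_2|$, exploiting cancellation between the positive stabilizations wanted on one summand and the negative stabilizations wanted on the other (this is exactly the $p \ge q$, $n \le m$ phenomenon of Lemma \ref{CostL21}). I expect the cleanest route is to first prove the statement by invoking Lemma \ref{CostL21} directly: taking $L_1 = (S^+)^{a_1/2}(S^-)^{a_1/2}(K_1)$ and $K_2 = (S^+)^{|a_2|/2}(S^-)^{|a_2|/2}(L_2)$ casts the problem into its framework with $p = n = a_1/2$ and $q = m = |a_2|/2$, yielding $\Cost(K_1\#K_2,L_1\#L_2) = |p - q| + |n - m| = |a_1/2 - |a_2|/2| \cdot 2 = |a_1| - |a_2|$ when $|a_1| \ge |a_2|$ — wait, this would give the lower bound as an equality, which contradicts the asserted upper bound being a genuine max. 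The resolution is that Lemma \ref{CostL21} requires the specific destabilization relationship to literally hold on the nose, which forces a particular alignment of invariants; in the general Legendrian-simple setting only the weaker two-sided estimate survives, and the upper bound must come from Lemma \ref{CostL18} combined with a case analysis on whether $|a_1| = |a_2|$. So the careful step is to verify that $\max\{|a_1|, |a_2|\}$, and not $|a_1| + |a_2|$, is always achievable as a stabilization count — which I would establish by the explicit slide-and-cancel construction sketched above rather than by quoting Lemma \ref{CostL21}.
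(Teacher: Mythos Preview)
There is a genuine gap, and it stems from two related errors.

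First, you invoke Lemma~\ref{CostL15} to conclude that $\mathcal K_1\#\mathcal K_2$ is Legendrian simple. But the hypothesis of Lemma~\ref{CostL15} is that every Legendrian representative of $\mathcal K_i$ destabilizes to a \emph{unique} Legendrian knot. Legendrian simplicity does not imply this: a Legendrian simple knot type can have several distinct maximal-$tb$ representatives (distinguished by rotation number), and then there is no single destabilization target. Negative torus knots are the standard example. Worse, Propositions~\ref{CostL16} and~\ref{CostL17} of the paper show precisely that connected sums of such Legendrian simple types are typically \emph{not} Legendrian simple. So your appeal to Lemma~\ref{CostL9} on $\mathcal K_1\#\mathcal K_2$ is unjustified, and the shortcut for part~(i) collapses.

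Second, throughout the argument you write relations like ``$L_1$ is obtained from $K_1$ by $a_1/2$ positive and $a_1/2$ negative stabilizations'' and ``$L_1=(S^+)^{a_1/2}(S^-)^{a_1/2}(K_1)$''. This is impossible: $K_1$ and $L_1$ both have maximal $tb$, and any stabilization strictly lowers $tb$, so neither can be a stabilization of the other unless $a_1=0$. The correct relation that Legendrian simplicity of $\mathcal K_i$ gives you is a \emph{common} stabilization, namely $(S^+)^{r_i}(K_i)=(S^-)^{r_i}(L_i)$ with $2r_i=\lvert rot(K_i)-rot(L_i)\rvert$. This is exactly what the paper uses. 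With that relation in hand, part~(i) follows from Lemma~\ref{CostL19} (upper bound) and Lemma~\ref{CostL8} (lower bound via the rotation-number difference $2(r_1+r_2)$), with no need for simplicity of the connected sum. For part~(ii), the lower bound is again Lemma~\ref{CostL8}; the upper bound is the explicit slide-and-cancel you gesture at, but carried out with the common-stabilization relation: for instance when $r_1\ge r_2$ one computes
\[
(S^+)^{r_1}(K_1\#K_2)=(S^-)^{r_1}(L_1)\#K_2=(S^-)^{r_1-r_2}(L_1)\#(S^-)^{r_2}(K_2)=(S^-)^{r_1-r_2}(S^+)^{r_2}(L_1\#L_2),
\]
giving total cost at most $2r_1=\max\{\Cost(K_1,L_1),\Cost(K_2,L_2)\}$. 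Your attempt to route this through Lemma~\ref{CostL21} fails for the same reason: its hypothesis is that one knot literally destabilizes to the other, which is never the case here.
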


\begin{proof}

By Lemma \ref{CostL9}, we have $(S^+)^{r_i}(K_i)=(S^-)^{r_i}(L_i)$, where $$2r_i=|rot(K_i)-rot(L_i)|=\Cost(K_i,L_i),\:i=1,2.$$  

 \noindent(i)  $rot(K_i)\leq rot(L_i),i=1,2$.
    
Using Lemma \ref{CostL19}, we have
\begin{align*}
    \Cost(K_1\#K_2,L_1\#L_2)&\leq \Cost(K_1,L_1)+\Cost(K_2,L_2)\\
    &=2(r_1+r_2).
\end{align*}

The reverse inequality can be derived using Lemma \ref{CostL8} in the following way.
\begin{align*}
    \Cost(K_1\#K_2,L_1\#L_2)&\geq |rot(K_1\#K_2)-rot(L_1\#L_2)|\\
    &=(rot(L_1)-rot(K_1))+(rot(L_2)-rot(K_2))\\
    &=2(r_1+r_2). 
\end{align*}
Hence, $ \Cost(K_1\#K_2,L_1\#L_2)=\Cost(K_1,L_1)+\Cost(K_2,L_2)$.

   \noindent (ii) $rot(K_1)\leq rot(L_1)$ and $rot(K_2)\geq rot(L_2)$.

    Observe that,
    \begin{align*}
        rot(K_1\#K_2)&=rot(K_1)+rot(K_2)\\
        &=rot(K_1)+rot(L_2)+2r_2,\: \text{ and }\\
        rot(L_1\#L_2)&=rot(L_1)+rot(L_2)\\
       & =rot(K_1)+rot(L_2)+2r_1.
    \end{align*}
    Then, $|rot(K_1\#K_2)-rot(L_1\#L_2)|=2r$, where $r=|r_1-r_2|$. Using Lemma \ref{CostL8}, we have
   $$\Cost(K_1\#K_2,L_1\#L_2)\geq 2r=|\Cost(K_1,L_1)-\Cost(K_2,L_2)|.$$ 
   Now, we consider two cases to prove 
     $$\Cost(K_1\#K_2,L_1\#L_2)\leq \max\{\Cost(K_1,L_1),\Cost(K_2,L_2)\}.$$ 
     First consider $r_2\leq r_1$. Then
 \begin{align*}
       (S^+)^{r_1}(K_1\#K_2)&=(S^-)^{r_1}(L_1\#K_2)\\
       &=(S^-)^{r}(L_1)\#(S^-)^{r_2}(K_2)\\
       &=(S^-)^{r}(L_1)\#(S^+)^{r_2}(L_2)\\
       &=(S^-)^{r}(S^+)^{r_2}(L_1\#L_2).
   \end{align*}
       
     Thus, $\Cost(K_1\#K_2,L_1\#L_2)\leq r+r_1+r_2=2r_1.$

    Now consider the case with $r_2\geq r_1$. Then
    \begin{align*}
        (S^-)^{r_2}(K_1\#K_2)&=(S^+)^{r_2}(K_1\#L_2)\\
        &=(S^+)^{r_1}(K_1)\#(S^+)^{r}(L_2)\\
        &=(S^-)^{r_1}(L_1)\#(S^+)^{r}(L_2)\\
        &=(S^-)^{r_1}(S^+)^{r}(L_1\#L_2).
    \end{align*}
     Thus, $\Cost(K_1\#K_2,L_1\#L_2)\leq r+r_1+r_2=2r_2.$ Hence,
     $$\Cost(K_1\#K_2,L_1\#L_2)\leq \max\{\Cost(K_1,L_1),\Cost(K_2,L_2)\}.$$
     
   \end{proof}
An application of the results proved in this article is demonstrated in the following example.
   \begin{example}
       Let $K_1$ and $K_2$ be oriented left-handed and right-handed Legendrian trefoils as shown in Figure \ref{fig:CostOLH}. Let $\Bar{K_1}$ be the Legendrian knot obtained by reversing the orientation of $K_1$. Note that $tb(K_1)=-6$ and $tb(K_2)=1$ and $rot(K_1)=-1, rot(K_2)=0$. Then $\Cost(K_1,\Bar{K_1})=|rot(K_1)-rot(\Bar{K_1})|=2$, since left-handed trefoil is a Legendrian simple knot type. The Legendrian knots $K_1\#K_2$ and $\Bar{K_1}\#K_2$ are of the same topological knot type since left-handed trefoil knot is invertible. Then by Lemma \ref{CostL22}
       $ \Cost(K_1\#K_2,\Bar{K_1}\#K_2)= 2$.
       \begin{figure}
           \centering
           \includegraphics[scale=0.5]{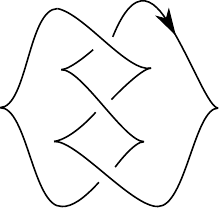}
           \hspace{30pt}
           \includegraphics[scale=0.6]{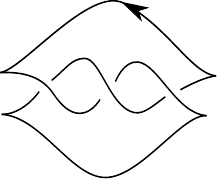}
           \caption{Oriented left-handed and right-handed Legendrian trefoils.}
           \label{fig:CostOLH}
       \end{figure}
   \end{example}

\section{Conclusion}\label{Costgraph}

For a topological knot type $\mathcal{K}$, let $\mathcal{L(K)}/\sim$ denote the set of equivalence classes of Legendrian representatives of $\mathcal{K}$ up to Legendrian isotopy. The Cost function gives a metric on the set $\mathcal{L(K)}/
\sim$ in the following way. The map $$d_C: (\mathcal{L(K)}/\sim)\times (\mathcal{L(K)}/\sim)\rightarrow \Z^+\cup\{0\}$$ defined as $d_C([K],
[\Tilde{K}])=\Cost(K,\Tilde{K})$ for all $[K],[\Tilde{K}]\in \mathcal{L(K)}/\sim$ is a metric. The map $d_C$ satisfies the first two properties of a metric (see Theorem \ref{CostL4} and Remark \ref{re:Cost1}). For the triangle 
inequality, consider $[K_1],[K_2],[K_3]\in \mathcal{L(K)}/\sim$. Let $F_i$ be a front projection of $K_i,\: i=1,2,3$. Let $SF_{1}\in \mathcal{SF}(F_1,F_2)$ and $SF_{2}\in \mathcal{SF}(F_2,F_3)$ be such that $
\Cost(K_1,K_2)=\Cost(F_1,F_2,SF_{1})$ and $\Cost(K_2,K_3)=\Cost(F_2,F_3,SF_{2})$. Therefore the sequence $(SF_{1},SF_{2})\in \mathcal{SF}(F_1,F_3)$. Thus \begin{align*}
    \Cost(K_1,K_3)&\leq \Cost(F_1,F_3,(SF_{1},SF_{2}))\\
    &=\Cost(F_1,F_2,SF_1)+\Cost(F_2,F_3,SF_2)\\
    &=\Cost(K_1,K_2)+\Cost(K_2,K_3).
\end{align*}
Hence, $d_C$ is a metric.

Using the Cost function we define a graph invariant of the topological knot. We associate a graph $G_{\mathcal{K}}$ to a topological knot type $\mathcal{K}$ in the following way. Fix the set of vertices for $G_{\mathcal{K}}$ to be $\mathcal{L(K)}/\sim$. Two vertices $[K],[\Tilde{K}]\in \mathcal{L(K)}/\sim$ are joined by an edge if and only if $\Cost(K,\Tilde{K})=1$. We refer to this graph as the graph of \emph{Legendrian representatives} of ${\mathcal{K}}$. The Cost function graph for unknot is shown in Figure \ref{fig:Costgraph}. 

\begin{figure}
    \centering
    \includegraphics[scale=0.7]{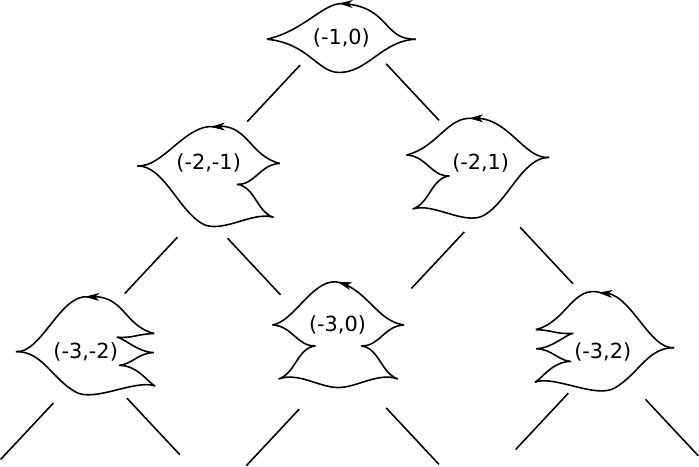}
    \caption{The Cost function graph for unknot.}
    \label{fig:Costgraph}
\end{figure}
\begin{remark}
     For Legendrian simple knot types, the Cost function graph is the same as the mountain range defined in \cite{E}.
\end{remark}

We now raise a few questions, given below, that the authors are currently investigating.
\begin{enumerate}
    \item In the class of prime knots, does the graph $G_{\mathcal{K}}$ together with the value of maximal $tb$ detect unknot? More explicitly, for a prime knot $K$, if $G_{\mathcal{K}}$ is isomorphic to that of unknot as a graph and maximal $tb$ of $\mathcal{K}$ equals $-1$ then does it necessarily follow that $\mathcal{K}$ is unknot?
    \item Is there a relation between the values of the Cost function for a given knot type and Eliashberg-Chekanov DGAs of corresponding Legendrian representatives?
    \item From a computational viewpoint, is there an algorithm or a procedure to generate the graph $G_{\mathcal{K}}$ for a given knot type $\mathcal{K}$?
\end{enumerate}

\bibliographystyle{plain}
\bibliography{references.bib}
\vspace{10pt}
\end{document}